 \newtheoremstyle{mystyle}
    {}
    {}
    {\normalfont}
    {}
    {\bfseries}
    {}
    { }
    {}
\theoremstyle{mystyle}
\newtheorem{ex}{Example}[section]
\newtheorem{df}[ex]{Definition}
\newtheorem{lem}[ex]{Lemma}
\newtheorem{prop}[ex]{Proposition}
\newtheorem{thm}[ex]{Theorem}
\newtheorem{rmk}[ex]{Remark}
\newtheorem{coro}[ex]{Corollary}
\newcommand{\qsp}{\ast_\diamond}
\theoremstyle{definition}
\newcommand{\Z}{\mathbb{Z}} 
\newcommand{\Q}{\mathbb{Q}}
\newcommand{\R}{\mathbb{R}}
\newcommand{\mz}{\mathcal{Z}}
\newcommand{\bg}{\begin} 
\newcommand{\en}{\end} 
\newcommand{\bsb}{\begin{shadebox}}
\newcommand{\esb}{\end{shadebox}}
\newcommand{\ijo}{\geq}
\newcommand{\ika}{\leq} 
\newcommand{\seki}{\prod}
\newcommand{\tenten}{\cdots}
\newcommand{\bal}{\begin{align}}
\newcommand{\eal}{\end{align}}
\newcommand{\bali}{\begin{align*}}
\newcommand{\eali}{\end{align*}}
\newcommand{\bgyo}{\begin{pmatrix}}
\newcommand{\egyo}{\end{pmatrix}}
\newcommand{\bshi}{\begin{vmatrix}}
\newcommand{\eshi}{\end{vmatrix}}
\newcommand{\Li}{\operatorname{Li}}
\newcommand{\qgerH}{\widehat{\gerH}}
\newcommand{\gerH}{\mathfrak{H}}
\newcommand{\benu}{\begin{enumerate}[i)]}
\newcommand{\eenu}{\end{enumerate}}
\newcommand{\ds}{\operatorname{ds}}
\newcommand{\dia}{\diamond}
\newcommand{\CLL}{\mathcal{C}\langle L\rangle}
\newcommand{\CL}{\mathcal{C}L}
\newcommand{\CC}{\mathcal{C}}
\newcommand{\ew}{{\bf 1}}
\newcommand{\gsh}{\hat{\shuffle}}
\newcommand{\RLL}{\mathcal{R} \langle L \rangle}
\newcommand{\RL}{\mathcal{R}  L }
\numberwithin{equation}{section}
\newcommand{\rqsp}{\overline{\ast_{\diamond}}}
\newcommand{\rgsh}{\overline{\shuffle}}
\begin{document}
 




\title{Generalized quasi-shuffle products}
\author{Masataka Satoh}
\address{Graduate School of Mathematics, Nagoya University, Nagoya, Japan.}
\email{masatakasato185@gmail.com}

\subjclass[2020]{ 
11M32
}

\keywords{multiple zeta values, q-analogues, quasi-shuffle Hopf products}

\begin{abstract}
In this paper, we introduce the notion of generalized quasi-shuffle products and give a criterion for their associativity. These extend the quasi-shuffle products introduced by Hoffman, which are often used to describe the stuffle and shuffle product for multiple zeta values. For $q$-analogues of multiple zeta values, the description of an analogue for the shuffle product can often not be described with the classical notion of quasi-shuffle products. We show that our generalization gives a natural extension to also include these types of products and we prove a generalization of a duality between the $q$-shuffle product and the $q$-stuffle product. 
\end{abstract}

\maketitle

\section{Introduction}


In this work, we will discuss the algebraic structure of multiple zeta values and their $q$-analogues. The main result is the introduction of generalized quasi-shuffle products, which we will use to describe the shuffle product for $q$-analogues and to give a new interpretation of the so-called dual stuffle product of multiple zeta values. The results obtained in this work are based on the authors master thesis at Nagoya University. 

\emph{Multiple zeta values} are real numbers defined for $k_1\geq 2, k_2,\dots,k_r\geq 1$ by 
\begin{align*}
\zeta (k_1,\dots,k_r) = \sum_{m_1 > m_2 > \cdots > m_r > 0} \frac{1}{m_1 ^{k_1}\cdots m_r ^{k_r} } \in \R\,.
\end{align*}
In particular, for $r=1$ the $\zeta(k_1)$ are the \emph{Riemann zeta values}. 
The multiple zeta values in depth two were first considered by Goldbach and Euler (\cite{K}). In 1776s, the multiple zeta values were introduced by Euler (\cite{E}). In 1992s, Hoffman defined multiple zeta values for any depth as above (\cite{H3}).
The multiple zeta values satisfy various relations \big(e.g., the \emph{double shuffle relation}. (\cite{AK}, \cite{IKZ}), the \emph{duality relation}. (\cite[p. 510]{Z2}, \cite[p. 13]{AK})\big). In addition, among them, there are also known relations obtained through knot invariants (e.g., \cite{LM}).
Furthermore, it is known that the multiple zeta values also appear in mathematical physics (e.g., \cite{BK}). In the following, we introduce some of the properties of multiple zeta values.

The product of two multiple zeta values can be expressed as a linear combination of multiple zeta values. For example, by the definition as an iterated sum, we have
\begin{align}\label{eq:stufflez2z3}
\begin{split}
    \zeta(2) \zeta(3) &= \sum_{m = 1}^{\infty} \frac{1}{{m}^{2}} \sum_{n = 1}^{\infty} \frac{1}{{n}^{3}}
    =\left( \sum_{m > n > 0} + \sum_{n > m > 0} + \sum_{m=n>0} \right) \frac{1}{{m}^{2} {n}^{3}} \\
    &= \zeta(2,3) + \zeta(3,2) + \zeta(5)\,.
    \end{split}
\end{align}
To describe this product algebraically, one can use the notion of quasi-shuffle products introduced by Hoffman (\cite{H},\cite{HI}). We will recall this classical notion of quasi-shuffle products in Section \ref{sec:qsh}, where we introduce a $\Q$-vector space $\gerH^{0}$ spanned by words in the letters $z_k$ for $k\geq 1$, which do not start with $z_1$. For example, an element in $\gerH^{0}$ is the word $z_3 z_4 z_2$, which will correspond to the multiple zeta value $\zeta(3,4,2)$. The space $\gerH^{0}$ can be equipped with the \emph{stuffle product} $\ast$ and we have, for example,
\begin{align}\label{al:ast}
z_2 \ast z_3&=z_2z_3+z_3z_2+z_5\,.
\end{align}
But since multiple zeta values can also be expressed as iterated integrals, we also have 
\begin{align}\label{eq:shufflez2z3}
       \zeta(2) \zeta(3) =3\zeta(3,2)+6\zeta(4,1)+\zeta(2,3).
\end{align}
This can also be described by using the theory of quasi-shuffle products, by defining the \emph{shuffle product} $\shuffle$ on $\gerH^0$. For example, 
\begin{align}\label{al:sh}
z_2\shuffle z_3 &= 3z_3 z_2 + 6z_4 z_1+z_2 z_3.
\end{align}
We then can view the multiple zeta values as a $\Q$-linear map $\zeta : \gerH^{0} \rightarrow \R$, which is a algebra homomorphism from $\gerH^{0}$ to $\mz$ with respect to both products $\ast$ and $\shuffle$. In particular, applying $\zeta$ to \eqref{al:ast} we obtain  \eqref{eq:stufflez2z3} and from \eqref{al:sh} we deduce \eqref{eq:shufflez2z3}.
we describe double shuffle relation
by comparing $\ast$ with $\shuffle$ and
define the \emph{double shuffle} element $\ds(w,v) = w \shuffle v - w \ast v \in \ker \zeta$ for $w,v \in \gerH^0$. Conjecturally all relations among multiple zeta values are a consequence of the double shuffle relations after some regularization process (\cite[p. 51]{AK}, \cite[Conjecture 1]{IKZ}).
We also define an involution $\tau: \gerH^0 \rightarrow \gerH^0$ and show that $\zeta \circ \tau = \zeta$, which is called the \emph{duality relation}. 
In Section \ref{chapter:qmzv}, we discuss the algebraic structure of $q$-analogues of multiple zeta values. In general, by a $q$-analogue, we mean some $q$-series, which evaluates to multiple zeta values as $q\rightarrow 1$.
In recent years, different models of $q$-analogues of multiple zeta values were studied (e.g., \cite{Ba0}, \cite{CEM}, \cite{EMS}, \cite{T0} \cite{Zh}). In this work, we consider the Schlesinger-Zudilin $q$-multiple zeta values defined for $k_1\geq 1, k_2,\dots,k_r \geq 0$ by 
\begin{align*}
\zeta_{q}(k_{1},\dots, k_{r}) = \sum_{m_{1} > \tenten > m_{r} > 0} \prod_{j=1}^r \frac{q^{k_j m_j }}{[m_j]_{q}^{k_{j}}},
 \end{align*} 
 where $[m]_q=\frac{1-q^m}{1-q}$. Here we see that for $k_1\geq 2, k_2,\dots,k_r\geq 1$ we have $\lim\limits_{q\rightarrow 1} \zeta_{q}(k_{1},\dots, k_{r})=\zeta(k_1,\dots,k_r)$. 
 Notice that by definition, the product of two $q$-analogues can also be expressed by the stuffle product formula, e.g., similar to \eqref{eq:stufflez2z3}
\begin{align*}
    \zeta_q(2) \zeta_q(3) = \zeta_q(2,3) + \zeta_q(3,2) + \zeta_q(5)\,.
\end{align*}

To describe this product algebraically, we will define the \emph{$q$-stuffle product} $*_{q}$ as a quasi-shuffle product on a space $\qgerH^0$, which can be seen as an extension of $\gerH^0$.

Since for multiple zeta values, the double shuffle relations conjecturally give all relations, it is natural to ask how this situation is for $q$-analogues and if there is an analogue of the shuffle product. To describe an analogue of the shuffle product, there have been several different approaches for different models of $q$-analogues. Some of these use Rota-Baxter operators, and Jackson integrals (e.g., \cite{CEM}, \cite{Z}), and others use more algebraic approaches (e.g., \cite{Ba0}, \cite{EMS}, \cite{T0}, \cite{T}). In analogy to the conjecture for multiple zeta values, it is expected that the corresponding $q$-shuffle relations also give all relations among $q$-analogues (c.f. \cite{Ba0}, \cite{T0}). 

In this work, we follow \cite{EMS} and \cite{T} and replace the iterated integrals with a certain action, which we will describe in Section \ref{Multiple_q-poly}. This analogue, and also all other variants for different models in the literature, can not be described by using the classical notion of quasi-shuffle products. This was one of the main motivations for this paper. 

The main result of this paper is to introduce a natural generalization of the notion of quasi-shuffle products (Definition \ref{def:generalizedqsh}) and show when this product is associative (Theorem \ref{main}).
Through the generalized quasi-shuffle product, we can describe an analogue of the shuffle product for $q$-analogues, called the $q$-shuffle product $\shuffle_q$ (Definition \ref{def:qshuffle}). For example, this will lead to the following analogue of \eqref{eq:shufflez2z3}
\begin{align*}
\zeta_q(2) \zeta_q(3) =&\,\, 3 \zeta_q(3,2)+ 6 \zeta_q(4,1) + \zeta_q(2,3) \\
&+ (1-q) \left( 2 \zeta_q(2,2) + 7 \zeta_q(3,1) + 3\zeta_q(4,0) \right)\\
&+ (1-q)^2 \left( 2 \zeta_q(3,0) + \zeta_q(2,1) \right)\,.
\end{align*}
We give some examples regarding Theorem $\ref{main}$ after defining the generalized quasi-shuffle product. They contain the integral shuffle product (c.f. \cite{T0} section 2.3).

Finally, we explain a certain duality between the $q$-stuffle product and the $q$-shuffle product, which was also discussed in \cite{EMS}. More precisely, we introduce an involution $\sigma: \qgerH \rightarrow \qgerH$ (Definition \ref{def:sigma}), such that for $w,v \in \qgerH^{0}$, we have
\begin{align*}
w \shuffle_{q} v = \sigma (\sigma (w) \ast_{q} \sigma(v)).
\end{align*}

At the end, we consider a multiple zeta value version and consider the dual stuffle product, defined by $\tau(\tau(w) \ast \tau(v))$ for $w,v \in \gerH^0$. Our final result will be to show that this dual stuffle product can also be described as a generalized quasi-shuffle product. Finally, using this generalized quasi-shuffle product, we get another expression for the double shuffle relations.

 \section{Multiple zeta values \& q-analogues of multiple zeta values}
In this section, we describe an algebra setup for the multiple zeta values and their q-analogues. 
We consider their algebraic structure by using quasi-shuffle products (following \cite{H},\cite{HI}). 
Later we describe the relationship between their algebra structures and multiple zeta values. And we also describe the relationship between these algebra structures and $q$-analogues of multiple zeta values.
\newcommand{\wt}{\operatorname{wt}}
\newcommand{\dep}{\operatorname{dep}}

\subsection{Quasi-shuffle products}\label{sec:qsh}
In this section, we recall quasi-shuffle products, which were introduced by Hoffman in \cite{H} and later studied in more detail in \cite{HI}. We will use this to describe the algebraic setting of multiple zeta values. The results in this sections are well-known and they can be found in \cite{AK} and \cite{HI}.
\\
Let $\mathcal{R}$ be a commutative ring (with unit) containing $\Q$ and let $L$ be an arbitrary set. We refer to the element in $L$ as \emph{letters}. $\mathcal{R} \langle L \rangle$ is the non-commutative polynomial in the variables in $L$ with coefficients in $\mathcal{R}$. A monic monomial $\alpha_1 \dots ,  \alpha_n \in \mathcal{R} \langle L \rangle$ with $\alpha_j \in L$ is called a \emph{word} and by $\ew$ we denote the \emph{empty word}. 
Let $\dia$ be a $\mathcal{R}$-bilinear product on $\mathcal{R} L$ which is commutative and associative.

\begin{df}\label{def:classicalqsh}
For words $w,v \in \mathcal{R} \langle L \rangle$, $a,b \in L$  we define the \emph{quasi-shuffle product} $\qsp$ 
\begin{align}\label{eq:qshp}
aw \qsp bv = a (w \qsp bv) + b(aw \qsp v) + (a \dia b)(w \qsp v)
\end{align}
and  $\ew \qsp w = w \qsp \ew = w$ for any $w \in \mathcal{R} \langle L \rangle$.
\end{df}

\begin{df}\label{def:revclassicalqsh}
For words $w,v \in \mathcal{R} \langle L \rangle$, $a,b \in L$  we define the \emph{dual product} $\rqsp$ 
\begin{align}
wa \rqsp vb = (w \rqsp vb)a + (wa \rqsp v)b + (w \rqsp v)(a \dia b)
\end{align}
and  $\ew \rqsp w = w \rqsp \ew = w$ for any $w \in \mathcal{R} \langle L \rangle$.
\end{df}
One can show that these two products actually coincide:

\begin{thm}(\cite[Theorem $9$]{Zu2})\label{dualeq}
    For words $w,v \in \mathcal{R} \langle L \rangle$, we have
    $$w \qsp v = w \rqsp v.$$
\end{thm}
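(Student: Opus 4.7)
The plan is to exhibit a common combinatorial formula computing both products, then show that each recursive definition recovers this formula by unpacking according to the extremal position of the output.

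First I would fix words $w = a_1 \cdots a_m$ and $v = b_1 \cdots b_n$ in $\mathcal{R}\langle L\rangle$ and define a \emph{generalized shuffle} of $w$ and $v$ as a surjection $\pi : [m] \sqcup [n] \twoheadrightarrow [N]$, for some $\max(m,n) \leq N \leq m+n$, such that the restrictions $\pi|_{[m]}$ and $\pi|_{[n]}$ are strictly increasing and each fiber $\pi^{-1}(k)$ contains at most one element from each of $[m]$ and $[n]$. To each such $\pi$ I would associate the monomial $c_1 \cdots c_N \in \mathcal{R}\langle L\rangle$ where $c_k$ equals $a_i$, $b_j$, or $a_i \dia b_j$ according as the fiber $\pi^{-1}(k)$ is $\{i\} \subset [m]$, $\{j\} \subset [n]$, or $\{i, j\}$ with $i \in [m]$ and $j \in [n]$. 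Let $Q(w,v)$ be the sum of these monomials over all such $\pi$, extended $\mathcal{R}$-bilinearly, with $Q(\ew, v) = v$ and $Q(w, \ew) = w$.

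Next, by induction on $m + n$, I would show $w \qsp v = Q(w,v)$. The base case is immediate from the definition. For the inductive step with $m, n \geq 1$ and $w = aw'$, $v = bv'$, I would partition the surjections contributing to $Q(w,v)$ according as the fiber $\pi^{-1}(1)$ contains only the first element of $[m]$, only the first element of $[n]$, or both. Deleting the first output position then yields a bijection from each class onto the generalized shuffles of a smaller pair, namely $(w', v)$, $(w, v')$, and $(w', v')$ respectively. Summing the three contributions and invoking the induction hypothesis recovers exactly the three summands on the right of \eqref{eq:qshp}, giving $w \qsp v = Q(w,v)$. An entirely analogous argument, partitioning instead by the fiber $\pi^{-1}(N)$ over the last output position, yields $w \rqsp v = Q(w,v)$. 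Comparing the two identities then gives $w \qsp v = w \rqsp v$.

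The main obstacle is the bookkeeping: one must verify carefully that removing an extremal fiber of a generalized shuffle sets up a bijection with the generalized shuffles of the reduced pair of words, and that the associated monomials match term by term. This is routine but needs some attention when one of the reduced words becomes empty. Notably, neither commutativity nor associativity of $\dia$ on $\mathcal{R} L$ enters this identity; those properties are required only for the stronger claim that $\qsp$ itself is commutative and associative.
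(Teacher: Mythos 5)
Your argument is correct, and it takes a genuinely different route from the one in the paper. The paper omits a direct proof of Theorem \ref{dualeq} and instead establishes the more general Lemma \ref{dualst} by Zudilin's double-recursion trick: expand $x \qsp y$ by peeling off first letters, invoke the induction hypothesis to replace the inner products by $\rqsp$, expand those by peeling off last letters, regroup the nine resulting terms by their final letter, and recognize the last-letter recursion for $\qsp$ applied to the original pair. That argument never needs a closed formula for the product and, importantly, goes through verbatim when $\dia$ takes values in $\RLL$ rather than $\RL$, which is exactly what the generalized setting of Section 3 requires. Your proof instead exhibits the common value of both products explicitly, as a sum over overlapping shuffles (surjections $\pi$ with increasing restrictions), and checks that conditioning on the fiber over the first, respectively last, output position reproduces the two recursions; the bijections you describe are the standard ones and the bookkeeping is sound, including your observation that neither commutativity nor associativity of $\dia$ is used. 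What your approach buys is an intrinsic, non-recursive description of $\qsp$ (essentially Hoffman's original formula), from which the first-letter/last-letter symmetry is manifest; what it costs is that the construction as written assumes $\dia$ maps $\RL \times \RL$ into $\RL$, so the collision fibers contribute single letters --- to cover Lemma \ref{dualst} you would have to let those fibers contribute whole words, whereas the paper's inductive argument needs no such modification.
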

We omit this proof but later we will describe the more generalized version of Theorem \ref{dualeq} in Section \ref{dual:stuffle}. The next theorem (\cite{H}, \cite{HI}) states that the quasi-shuffle product is associative and commutative when $\diamond$ is associative and commutative. 
\begin{thm}(\cite[Theorem 2.1]{H}, \cite[Theorem 2.1]{HI})\label{thm:quasishufflealgebra}
Equipped with the quasi-shuffle product $\qsp$, the space $\mathcal{R}\langle L \rangle$ becomes a commutative $\mathcal{R}$-algebra.
\end{thm}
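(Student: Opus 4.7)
My plan is to prove the commutativity and associativity of $\qsp$ by induction on the total length of the words involved; the remaining algebra axioms (unitality with $\ew$ and $\mathcal{R}$-bilinearity) are built directly into Definition \ref{def:classicalqsh}. The base case in both inductions, where one of the factors is the empty word, is handled at once by the unit identity $\ew \qsp w = w \qsp \ew = w$, so the entire content sits in the inductive step.

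For commutativity I would write $u = au'$, $v = bv'$ with $a,b \in L$ and expand both $u \qsp v$ and $v \qsp u$ via the recursion \eqref{eq:qshp} into three summands each. Applying the induction hypothesis to the strictly shorter pairs $u' \qsp bv'$, $au' \qsp v'$, and $u' \qsp v'$ matches the first two summands of $u \qsp v$ with the first two of $v \qsp u$, while the third summands agree because $\diamond$ is commutative on $\mathcal{R}L$, giving $(a \diamond b)(u' \qsp v') = (b \diamond a)(v' \qsp u')$.

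For associativity I would prove $(u \qsp v) \qsp w = u \qsp (v \qsp w)$ by induction on $|u|+|v|+|w|$, again writing $u = au'$, $v = bv'$, $w = cw'$. The strategy is to apply the recursion \eqref{eq:qshp} twice on each side, first expanding the inner product and then pairing each of its summands against the remaining factor using $\mathcal{R}$-bilinearity. On each side the result is a collection of summands, each carrying a distinguished leading decoration drawn from $\{a,b,c,a\diamond b,a\diamond c,b\diamond c\}$, together with a single triple-contraction term whose leading factor is $(a \diamond b)\diamond c$ on the left and $a \diamond (b \diamond c)$ on the right. Grouping summands by leading decoration and using the induction hypothesis on the strictly shorter triples inside makes every pair of corresponding summands match, and the triple-contraction terms agree thanks to the associativity of $\diamond$ on $\mathcal{R}L$.

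The main obstacle is precisely this combinatorial bookkeeping. No individual step is deep — the content is transferred from the commutativity and associativity of $\diamond$ on $\mathcal{R}L$ to the quasi-shuffle product $\qsp$ on $\mathcal{R}\langle L\rangle$ — but the matching in the associativity expansion requires care, since a term with a given leading decoration on one side may collapse, after $\mathcal{R}$-linearity and a use of the induction hypothesis, into a sum of several summands on the other. The cleanest conceptual reorganisation is to encode $\qsp$ as a sum over surjections between index sets, as in \cite{HI}, and I would fall back on that formulation if the bare recursive induction becomes unwieldy.
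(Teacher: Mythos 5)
Your plan is correct, and it is the standard argument: commutativity by induction on $|u|+|v|$ using commutativity of $\diamond$, and associativity by induction on $|u|+|v|+|w|$, expanding both sides twice, grouping by leading decoration, reassembling the $c$-led (resp. $a$-led) clusters via bilinearity and the induction hypothesis, and matching the triple-contraction terms $(a\diamond b)\diamond c$ and $a\diamond(b\diamond c)$ using associativity of $\diamond$ on $\mathcal{R}L$. This is essentially Hoffman's original proof. The paper, however, deliberately \emph{omits} a direct proof of Theorem \ref{thm:quasishufflealgebra}: it cites \cite{H} and \cite{HI} and recovers the statement as the special case of the generalized framework, namely Theorem \ref{main} and Theorem \ref{thm:main2}, where $\diamond$ is allowed to take values in $\mathcal{R}\langle L\rangle$ rather than $\mathcal{R}L$. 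In that setting the paper's notion of associativity of $\diamond$ (phrased via the maps $F$ and $R$) reduces, when $\alpha\diamond\beta\in\mathcal{R}L$ for all letters, to the ordinary condition $(\alpha\diamond\beta)\diamond\gamma=\alpha\diamond(\beta\diamond\gamma)$, as the paper notes just before Theorem \ref{main}, so the classical theorem follows. The trade-off is the expected one: your direct induction is shorter and self-contained for the classical case, while the paper's route buys the result for free from the more general machinery it needs anyway for the $q$-shuffle product, at the cost of the heavier bookkeeping in Lemmas \ref{l1}--\ref{l2}. Either way, your argument is a valid proof of the stated theorem; just note that the associativity of $\diamond$ must be assumed (as the paper does in Definition \ref{def:classicalqsh} by requiring $\diamond$ commutative and associative), since the theorem fails without it.
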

We omit this proof because we can see more generalized version of this theorem later. It is also worth mentioning that $\mathcal{R}$ in \cite{H} and \cite{HI} is a field.
Next, we will give two examples of quasi-shuffle products, namely the shuffle product and the stuffle product.
\\

Let $\Q \langle x,y \rangle $ be the non-commutative polynomial ring in the variables $x,y$.
\begin{enumerate}[i)]
 \item We define $\gerH=\Q \langle x,y \rangle $ and its $\Q$-submodules
$$ \gerH^{0} =\Q+ x \gerH y \subset \gerH^{1} =\Q + \gerH y \subset \gerH . $$
\item For $k \ijo 1$ we set $z_{k} = x^{k-1}y$. Then we have
\begin{align*}
 \gerH^{1} &= \Q \langle z_{1} ,z_2 , \dots \rangle ,\\
  \gerH^{0} &= \Q + \langle z_{k_1} \cdots z_{k_r} | r \ijo 1, k_1 \ijo 2, k_2 ,\dots, k_r \ijo 1 \rangle.
\end{align*}
\end{enumerate}

Our first example of the quasi-shuffle product will be the so-called stuffle product. For this we consider the case $\mathcal{R}=\Q$, $L=\{z_1,z_2, \cdots \}$ and $z_i \dia z_j$ be $z_{i+j}$ with $z_i,z_j \in L$. In this case we have $\gerH^1 = \mathcal{R}\langle L\rangle$.
Then the corresponding quasi-shuffle product  will we be called \emph{stuffle product}, denoted by $\ast$, and \eqref{eq:qshp} becomes
\begin{align*}
z_i w \ast z_j v = z_i (w \ast z_j v) + z_j(z_i w \ast v) + z_{i+j}(w \ast v)
\end{align*}
and  $\ew \ast w = w \ast \ew = w$ for any $w \in \Q \langle z_1,z_2, \cdots \rangle$. By Theorem \ref{thm:quasishufflealgebra} we obtain a commutative $\Q$-algebra $\gerH^1_\ast$. Notice that the subspace $\gerH^0 \subset \gerH^1$ is closed under $\ast$ and therefore we also obtain a $\Q$-algebra  $\gerH^0_\ast$.\\
Next, we will check $\zeta$ is an algebra homomorphism from $\gerH_{\ast}^{0}$ to $\R$.
\bg{df}
\label{zetamap}
We define the $\Q$-linear map $\zeta$ by
 \begin{align*}
    \begin{array}{rccc}
\zeta \colon &\gerH^{0}&\longrightarrow& \R                    \\
        & z_{k_1}\cdots z_{k_r}                    & \longmapsto   & \zeta(k_1,\cdots,k_r).
\end{array}
\end{align*}

\en{df}

\bg{prop}(\cite[Theorem 4.2]{H2}, \cite[p. 308]{IKZ})
\label{st}
For $v,w \in \gerH^{0}$ we have
\begin{align*}
\zeta(v)\zeta(w)=\zeta(v \ast w).
\end{align*}
In particular, the space $\mz$ is a $\Q$-algebra.
\en{prop}

Next, we will introduce the shuffle product.
Let $k$ be $\Q$, $L$ be $\{ x,y \}$ and $\alpha \dia \beta$ be $0$ with $\alpha,\beta \in L$. Then we define shuffle $\shuffle$ product by
\begin{align*}
\alpha w \shuffle \beta v = \alpha (w \shuffle \beta v) + \beta(\alpha w \shuffle v) 
\end{align*}
and  $\ew \shuffle w = w \shuffle \ew = w$ for any $w \in  \gerH$.

By Theorem \ref{thm:quasishufflealgebra}, we obtain a commutative $\Q$-algebra $\gerH_\shuffle$\.. Notice that both subspaces $\gerH^0 \subset \gerH^1 \subset \gerH$ are closed under $\shuffle$ and therefore we also obtain $\Q$-subalgebras  $\gerH^0_\shuffle$ and $\gerH^1_\shuffle$.\\
It is also known that that $\zeta$ is an algebra homomorphism from $\gerH_{\shuffle}^{0}$ to $\R$.

\bg{prop}(\cite[p. 309]{IKZ})
\label{sh}
For $v,w \in \gerH^{0}$ we have
\begin{align*}
\zeta(v)\zeta(w)=\zeta(v \shuffle w).
\end{align*}
\en{prop}

\begin{df}\label{def:tau}
We define the $\Q$-anti-automomorphism $\tau$ on $\gerH^{0}$ by
 \begin{align*}
 \begin{array}{rccc}
\tau \colon &\gerH&\longrightarrow& \gerH                    \\
        & x                    & \longmapsto   & y \\
        & y                    & \longmapsto   & x \\
         & \ew                    & \longmapsto   & \ew .\\ 
\end{array}
\end{align*}
\end{df}

\begin{thm}(\cite[p. 510]{Z2})
\label{dual}
For $w \in \gerH^{0}$ we have
\begin{align*}
    \zeta(w) = \zeta(\tau (w)).
\end{align*}
\end{thm}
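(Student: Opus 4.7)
The plan is to prove the duality relation via the iterated integral representation of multiple zeta values, a second incarnation of $\zeta$ distinct from the series used in the excerpt, and then to exploit the symmetry $t \mapsto 1-t$ on the unit interval, which swaps the two one-forms $dt/t$ and $dt/(1-t)$ used to build these iterated integrals. This analytic symmetry exactly matches the combinatorial action of $\tau$ on $\gerH^{0}$, which reverses a word and swaps $x \leftrightarrow y$. Concretely, for an admissible word $w = z_{k_1} \cdots z_{k_r} = x^{k_1-1} y \cdots x^{k_r-1} y \in \gerH^{0}$ with $k = k_1 + \cdots + k_r$, the first step is to establish
\begin{align*}
\zeta(w) = \int_{0 < t_1 < \cdots < t_k < 1} f_{\epsilon_1}(t_1) \cdots f_{\epsilon_k}(t_k)\, dt_1 \cdots dt_k,
\end{align*}
where $f_0(t) = 1/t$, $f_1(t) = 1/(1-t)$, and $\epsilon_i = 0$ (respectively $1$) if the $i$-th letter of $w$ is $x$ (respectively $y$). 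This is obtained by expanding each $1/(1-t)$ factor as a geometric series and integrating iteratively over the simplex; the admissibility hypothesis, meaning that $w$ starts with $x$ and ends with $y$, ensures that the integrals are proper at both endpoints $0$ and $1$.

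Next I would apply the measure-preserving bijection $(t_1,\ldots,t_k) \mapsto (u_1,\ldots,u_k)$ defined by $u_i = 1 - t_{k+1-i}$. This sends $0 < t_1 < \cdots < t_k < 1$ to $0 < u_1 < \cdots < u_k < 1$, and the signs from each $dt_i = -du_{k+1-i}$ are absorbed by the absolute value of the Jacobian of this ordinary multiple integral. Using the identities $f_0(1-u) = f_1(u)$ and $f_1(1-u) = f_0(u)$, the integrand transforms into $f_{\epsilon_1'}(u_1) \cdots f_{\epsilon_k'}(u_k)$ with $\epsilon_i' = 1 - \epsilon_{k+1-i}$. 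The tuple $(\epsilon_i')$ encodes precisely the word $\tau(w)$, since $\tau$ both reverses the sequence of letters and swaps the roles of $x$ and $y$, so the transformed integral equals $\zeta(\tau(w))$, and $\zeta(w) = \zeta(\tau(w))$ follows.

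The main obstacle is the first step: carefully justifying the iterated integral representation, in particular the interchange of summation with nested integration and the verification that the integrals converge precisely under the admissibility condition $w \in \gerH^{0}$. Once the representation is secured, the change of variables is a clean symbolic calculation, and no delicate sign bookkeeping is required because working with the ordinary Lebesgue multiple integral (rather than as a formal iterated differential form) absorbs the signs from $d(1-u) = -du$ into the absolute value of the Jacobian.
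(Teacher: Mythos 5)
The paper gives no proof of Theorem \ref{dual}; it is quoted from \cite{Z2}, so there is no in-paper argument to compare against. Your route---the Kontsevich iterated-integral representation followed by the substitution $t \mapsto 1-t$, matched against the fact that $\tau$ is the anti-automorphism swapping $x \leftrightarrow y$---is the standard proof of duality and is the right strategy.

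There is, however, one concrete error: your integral representation is oriented the wrong way, and as written it is false. On the simplex $0 < t_1 < \cdots < t_k < 1$ the variable $t_1$ sits near $0$ and $t_k$ near $1$, so convergence requires the form attached to $t_1$ to be $dt/(1-t)$ and the form attached to $t_k$ to be $dt/t$. An admissible word begins with $x$ (which you send to $f_0(t)=1/t$) and ends with $y$ (sent to $f_1(t)=1/(1-t)$), so your rule ``$\epsilon_i$ is determined by the $i$-th letter of $w$'' puts $1/t_1$ next to the endpoint $0$ and $1/(1-t_k)$ next to the endpoint $1$. Already for $w=xy$ this gives
\begin{align*}
\int_{0<t_1<t_2<1} \frac{dt_1}{t_1}\,\frac{dt_2}{1-t_2},
\end{align*}
which diverges, and in particular admissibility does \emph{not} make your displayed integral proper at the endpoints. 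The correct convention on this simplex is $\epsilon_i = $ (type of the $(k+1-i)$-th letter of $w$); equivalently, keep your letter-to-variable matching but integrate over $1 > t_1 > \cdots > t_k > 0$. Once the orientation is fixed, the rest goes through exactly as you describe: the measure-preserving map $u_i = 1 - t_{k+1-i}$ reverses the word and exchanges $f_0 \leftrightarrow f_1$, which is precisely the action of $\tau$, and $\zeta(w)=\zeta(\tau(w))$ follows. So the defect is a bookkeeping error in the one formula you correctly identify as the crux, not a flaw in the overall approach; see \cite{AK} for the representation with the standard orientation.
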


As a consequence of Corollary \ref{st} and \ref{sh}, we obtain the following family of linear relations among multiple zeta values.
\begin{prop}(\cite[Proposition 1.4.4]{AK})
For $w,$ $v \in \gerH^{0}$ we have
\begin{align*}
\zeta (w \shuffle v - w*v)=0.
\end{align*}
\end{prop}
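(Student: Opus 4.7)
The plan is essentially a one-line deduction from the two preceding propositions. Both Proposition \ref{st} and Proposition \ref{sh} assert that for $w,v \in \gerH^{0}$, the quantity $\zeta(w)\zeta(v) \in \R$ can be expressed as $\zeta$ applied to a specific element of $\gerH^{0}$ — namely $w \ast v$ in one case and $w \shuffle v$ in the other. Equating these two expressions of the same real number gives $\zeta(w \ast v) = \zeta(w \shuffle v)$, and then $\Q$-linearity of the map $\zeta$ (which is part of Definition \ref{zetamap}) immediately yields $\zeta(w \shuffle v - w \ast v) = 0$.

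Concretely, I would first note that both $\gerH^{0}_{\ast}$ and $\gerH^{0}_{\shuffle}$ are closed subalgebras of their respective ambient algebras, so $w \ast v$ and $w \shuffle v$ both lie in $\gerH^{0}$ and it is legitimate to apply $\zeta$ to them. Next I would chain together the equalities $\zeta(w \shuffle v) = \zeta(w)\zeta(v) = \zeta(w \ast v)$, citing Proposition \ref{sh} for the first equality and Proposition \ref{st} for the second. Finally, writing $\zeta(w \shuffle v - w \ast v) = \zeta(w \shuffle v) - \zeta(w \ast v)$ by linearity gives zero.

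There is no real obstacle here: the work has all been done in establishing that $\zeta$ is an algebra homomorphism for each of the two products separately. The content of the statement is precisely the observation that $\gerH^{0}$ carries two different algebra structures which both admit $\zeta$ as a homomorphism to $\R$, so their difference must lie in $\ker \zeta$. The only care needed is to keep the two products notationally distinct and to remember that the equality holds in $\R$ (as an equality of real numbers $\zeta(w)\zeta(v)$), which then lifts via linearity to the stated identity inside $\ker \zeta \subset \gerH^{0}$.
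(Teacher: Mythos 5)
Your proposal is correct and matches the paper's own reasoning exactly: the paper presents this proposition precisely "as a consequence of" Propositions \ref{st} and \ref{sh}, i.e.\ chaining $\zeta(w \shuffle v) = \zeta(w)\zeta(v) = \zeta(w \ast v)$ and applying $\Q$-linearity of $\zeta$. Nothing further is needed.
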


 \begin{df}
 We define finite double shuffle $\ds$ by follows.
 For $w,$ $v \in \gerH^{0}$,
 \begin{align*}
 \ds(w,v):=w \shuffle v - w*v.
 \end{align*}

 \end{df}
\begin{ex}
    Let $w,v \in \gerH^{0}$ be $w=z_2$, $v=z_3$. 
    Then $\ds(w,v)= 2z_3 z_2+6z_4 z_1-z_5$. Therefore, we have
    $2\zeta(3,2)+6\zeta(4,1)-\zeta(5)=0$
\end{ex}


\subsection{q-analogue of multiple zeta values}\label{chapter:qmzv}
In this section, we discuss an algebra setup for the $q$-analogue of multiple zeta values. 

In detail, we will define $q$-analogue of multiple zeta values, and view it as a map $\zeta_q$ similar as we did for multiple zeta values. For this we define a space $\qgerH$ as an analogue of $\gerH$ and introduce the $q$-stuffle product. Later, we will check the map $\zeta_q$ is $\CC$-algebra homomorphisms with respect to the $q$-stuffle product.

We introduce the algebra setup of $q$-analogues of multiple zeta values just like multiple zeta values.
\begin{df}
Let $\hbar$ be a formal variable that corresponds to $(1-q)$.
$$\mathcal{C}:=\Q [ \hbar ,\hbar^{-1} ].$$
\begin{enumerate}[i)]
 \item We define $\widehat{\gerH}=\mathcal{C}\langle a,b \rangle $ and its $\mathcal{C}$-submodules
$$ \qgerH^{0} =\mathcal{C}+ a \qgerH b \subset \qgerH^{1} =\mathcal{C} + \qgerH b \subset \qgerH . $$
\item For $k \in \Z$ with $k \ijo 0$, $e_{k} = \overbrace{ a \cdots a }^{k}b$ then we can write
\begin{align*}
 \qgerH^{1} &= \mathcal{C} \langle e_{0},e_1 , e_2 , \dots \rangle ,\\
  \qgerH^{0} &= \mathcal{C} + \langle e_{k_1} \cdots e_{k_r} | r \ijo 1, k_1 \neq 0, k_2 ,\dots, k_r \ijo 0 \rangle.
\end{align*}
\end{enumerate}
\end{df}
We endow $\Q[[q]]$ with a $\mathcal{C}$-algebra structure by letting $\hbar$ act as multiplication by $1-q$. 
\begin{df}
We define the $\mathcal{C}$-linear map $\zeta_{q}$
 \begin{align*}
    \begin{array}{rccc}
\zeta_{q} \colon &\qgerH^{0}&\longrightarrow& \Q [[q]]                    \\
        & e_{k_1}\tenten e_{k_n}                    & \longmapsto   & \zeta_{q}(k_1,\tenten,k_n)
\end{array}
\end{align*}
and set $\zeta_{q}(\ew) =1$.
\end{df}

\begin{df}\label{def:stuffle}
Let $L=\{e_{0},e_{1},e_{2},\dots \}$ and define $\dia$ by
\begin{align*}
e_{k}\dia_{q} e_{l}=e_{k+l}
\end{align*}
for $k,l \in \Z_{\ijo 0}$.
Then we define $\ast_{q}$ is quasi-shuffle product using $\dia_{q}$.
We call this $\ast_{q}$ the \emph{$q$-stuffle product}.
We can find directly that $\qgerH^0$ and $\qgerH^1$ are both closed under $\ast_q$
\end{df}

\begin{prop}(\cite[Theorem 5.2]{EMS}, \cite[Proposition 2.1]{T})
\label{qst:hom}
For $w$, $u \in \qgerH^{0}$ we have
\begin{align*}
\zeta_{q}(w)\zeta_{q}(u)=\zeta_{q}(w\ast_q u).
\end{align*}
In particular, the map $\zeta_q$ is a $\mathcal{C}$-algebra homomorphism.
\end{prop}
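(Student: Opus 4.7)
The plan is to mimic the well-known proof that $\zeta$ is a stuffle homomorphism (Proposition \ref{st}), using the fact that the $q$-stuffle product is exactly the quasi-shuffle product built from the operation $\dia_q$ on letters. The crucial observation is the pointwise identity
\begin{align*}
\frac{q^{km}}{[m]_q^{k}} \cdot \frac{q^{lm}}{[m]_q^{l}} = \frac{q^{(k+l)m}}{[m]_q^{k+l}},
\end{align*}
which tells us that when the innermost summation indices of two Schlesinger–Zudilin series coincide, the two factors combine to exactly the factor corresponding to $e_{k+l} = e_k \dia_q e_l$.

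First I would fix words $w = e_{k_1}\cdots e_{k_r}$ and $u = e_{l_1} \cdots e_{l_s}$ in $\qgerH^0$ (so $k_1,l_1 \neq 0$) and expand
\begin{align*}
\zeta_q(w)\,\zeta_q(u) = \sum_{\substack{m_1 > \cdots > m_r > 0 \\ n_1 > \cdots > n_s > 0}} \prod_{i=1}^r \frac{q^{k_i m_i}}{[m_i]_q^{k_i}} \prod_{j=1}^s \frac{q^{l_j n_j}}{[n_j]_q^{l_j}}.
\end{align*}
Then I would split the outer sum into the three regions $m_1 > n_1$, $n_1 > m_1$, and $m_1 = n_1$. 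In the first region, factoring out the $m_1$-term yields $\zeta_q(e_{k_1} \cdot (e_{k_2}\cdots e_{k_r}))$ paired (by the inductive hypothesis) with $\zeta_q(w' \ast_q u)$ where $w' = e_{k_2}\cdots e_{k_r}$; symmetrically for the second region. In the third region, the identity above collapses the two $m_1$-factors into $q^{(k_1+l_1)m_1}/[m_1]_q^{k_1+l_1}$, producing $\zeta_q(e_{k_1+l_1} \cdot (w' \ast_q u'))$ with $u' = e_{l_2}\cdots e_{l_s}$. Summing the three contributions matches exactly the quasi-shuffle recursion
\begin{align*}
w \ast_q u = e_{k_1}(w' \ast_q u) + e_{l_1}(w \ast_q u') + (e_{k_1}\dia_q e_{l_1})(w' \ast_q u'),
\end{align*}
which gives the claim by induction on $r+s$, with base case handled by $\zeta_q(\ew)=1$ and $\ew \ast_q v = v$.

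The main technical subtlety is ensuring convergence and closure: I must check that the recursion stays inside $\qgerH^0$ (otherwise an admissibility issue arises in the third region when $k_1+l_1$ might be small, but since $k_1 \neq 0$ and $l_1 \geq 0$ one still has $k_1 + l_1 \neq 0$, so $e_{k_1+l_1}(w' \ast_q u') \in \qgerH^0$), and that all the $q$-series involved converge as elements of $\Q[[q]]$ so the rearrangement is valid term by term as formal power series in $q$. Because each $\zeta_q(k_1,\dots,k_r)$ is a well-defined element of $\Q[[q]]$ (the inner factor $q^{k_j m_j}$ ensures each coefficient of $q^N$ receives only finitely many contributions from indices $m_j$), the splitting of the double sum into three disjoint regions is a legitimate rearrangement in $\Q[[q]]$, and the combinatorial identity above then finishes the induction. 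The "in particular" assertion follows because $\zeta_q$ is $\mathcal{C}$-linear by construction and preserves the unit $\ew \mapsto 1$.
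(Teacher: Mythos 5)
Your argument is correct and is exactly the standard sum-splitting proof that the paper itself defers to its references (\cite[Theorem 5.2]{EMS}, \cite[Proposition 2.1]{T}) and illustrates with the example $\zeta_q(2)\zeta_q(3)$ in the introduction: decompose the product of the two iterated sums according to $m_1>n_1$, $n_1>m_1$, $m_1=n_1$, use $\frac{q^{km}}{[m]_q^{k}}\cdot\frac{q^{lm}}{[m]_q^{l}}=\frac{q^{(k+l)m}}{[m]_q^{k+l}}$ to match the diagonal term with $e_{k_1}\dia_q e_{l_1}=e_{k_1+l_1}$, and compare with the quasi-shuffle recursion. Your closure check ($k_1+l_1\neq 0$, so all three terms stay in $\qgerH^{0}$) and the formal-power-series justification of the rearrangement are the right points to address. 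The one place where the write-up should be tightened is the inductive step: after splitting off the outermost index $m_1$, the remaining sum is over indices bounded above by $m_1$, so the inductive hypothesis cannot be applied to $\zeta_q(w'\ast_q u)$ directly; you should instead run the induction on the truncated sums $\zeta_q^{<M}(\,\cdot\,)=\sum_{M>m_1>\cdots>m_r>0}(\cdots)$, prove $\zeta_q^{<M}(w)\zeta_q^{<M}(u)=\zeta_q^{<M}(w\ast_q u)$ for all $M$, and then let $M\to\infty$ (or equivalently note that these partial sums converge coefficientwise in $\Q[[q]]$). This is a standard and purely cosmetic repair; the substance of the proof is right.
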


\subsection{Multiple q-polylogarithm}
\label{Multiple_q-poly}
We define the $q$-multiple polylogarithm for checking $\zeta_{q}$ is $\CC$-algebra homomorphism with respect to the $q$-shuffle product. We have already described $\zeta_{q}$ is $\CC$-algebra homomorphisms with respect to the $q$-stuffle product (Proposition \ref{qst:hom}), which was done in a similar way as for multiple zeta values. However, we have to use a different way from multiple zeta values (Proposition \ref{sh}) in the case of $q$-shuffle product. For this we define a $q$-analogue of the multiple polylogarithm $\Li^{q}$ and define an action action of $\qgerH^1$ on $\Q[[q, z]]$. 
First, we define $q$-multiple polylogarithm and $\mathcal{C}$-linear map.
 \begin{df}
We define extended $q$-multiple polylogarithm $\Li^{q}_{k_{1},\dots,k_{r}}$ for  $k_{1},\cdots , k_{r} \ijo 0$ by
 \begin{align*}
 \Li^{q}_{k_{1},\dots, k_{r}}(z) = \sum_{m_{1} > \tenten > m_{r} > 0} z^{m_1}\prod_{j=1}^r \frac{q^{k_j m_j }}{[m_j]_{q}^{k_{j}}}.
 \end{align*}
 \end{df}
 \begin{df}
We define the $\mathcal{C}$- linear map $\Li^{q}$
 \begin{align*}
    \begin{array}{rccc}
\Li^{q} \colon &\qgerH^{1}&\longrightarrow& \Q [[q,z]]                    \\
        & e_{k_1}\tenten e_{k_r}                    & \longmapsto   & \Li^{q}_{k_{1},\dots, k_{r}}(z).
\end{array}
\end{align*}
and set $\Li^{q}_{1} =1$.
\end{df}
Next, we define an action for checking that $\zeta_{q}$ is a $\CC$-algebra homomorphism using $q$-shuffle product in the next section. The following action was defined in \cite{T}.
\begin{df}
We define an action of $\qgerH^{1}$ on $\Q [[q,z]]$ as follows.
For all $g \in z \Q [[q,z]]$ 
\begin{enumerate}[i)]
\item $a \cdot g = (1-q) \sum_{j=1}^{\infty} g|_{z=q^{j}z}.$
\item $b \cdot g = \frac{z}{1-z} g.$
\end{enumerate}
Notice that the last letter of all elements in $\qgerH^{1}$ is "$b$". Hence, we always use ii) the first time and we can consider an action of $\qgerH^{1}$ on $\Q [[q,z]]$.
\end{df}
\begin{lem}(\cite[p. 5]{T})
For all $n \in \Z_{> 0}$, we have
\begin{align*}
a^{n-1} \Li^{q}_{1,e_{k_2},\dots, e_{k_r}}(z) = \Li^{q}_{e_{n},e_{k_2},\dots, e_{k_r}}(z).
\end{align*}
\end{lem}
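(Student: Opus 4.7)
The plan is to prove the identity by induction on $n$, with the base case $n=1$ being immediate (since $a^{0}$ is the identity action and the two sides agree by convention). The inductive step reduces to the single assertion that applying $a$ to $\Li^{q}_{k,k_{2},\dots,k_{r}}(z)$ produces $\Li^{q}_{k+1,k_{2},\dots,k_{r}}(z)$ for every $k\geq 1$, so the core of the proof is this one computation.

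To carry out that computation, I would first note that $\Li^{q}_{k,k_{2},\dots,k_{r}}(z)$ lies in $z\Q[[q,z]]$, since in its defining series the outer summation variable satisfies $m_{1}\geq 1$, so the action of $a$ is defined. Writing out the definition of the action and substituting $z \mapsto q^{j}z$ inside the $m_{1}$-sum, one obtains
\begin{align*}
a\cdot\Li^{q}_{k,k_{2},\dots,k_{r}}(z)
=(1-q)\sum_{j\geq 1}\sum_{m_{1}>\cdots>m_{r}>0} (q^{j}z)^{m_{1}}\,\frac{q^{k m_{1}}}{[m_{1}]_{q}^{k}}\prod_{l=2}^{r}\frac{q^{k_{l}m_{l}}}{[m_{l}]_{q}^{k_{l}}}.
\end{align*}
Interchanging the order of summation (legal since we are working in $\Q[[q,z]]$) and evaluating the geometric series $\sum_{j\geq 1}q^{jm_{1}}=\frac{q^{m_{1}}}{1-q^{m_{1}}}$, the prefactor becomes $(1-q)\cdot\frac{q^{m_{1}}}{1-q^{m_{1}}}=\frac{q^{m_{1}}}{[m_{1}]_{q}}$. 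This contributes one additional factor of $q^{m_{1}}$ and one additional factor of $1/[m_{1}]_{q}$ to the summand, turning $\frac{q^{km_{1}}}{[m_{1}]_{q}^{k}}$ into $\frac{q^{(k+1)m_{1}}}{[m_{1}]_{q}^{k+1}}$, which is exactly the series expansion of $\Li^{q}_{k+1,k_{2},\dots,k_{r}}(z)$.

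Feeding this identity into the induction hypothesis $a^{n-2}\cdot \Li^{q}_{1,k_{2},\dots,k_{r}}(z)=\Li^{q}_{n-1,k_{2},\dots,k_{r}}(z)$ yields
\begin{align*}
a^{n-1}\cdot \Li^{q}_{1,k_{2},\dots,k_{r}}(z)
=a\cdot \Li^{q}_{n-1,k_{2},\dots,k_{r}}(z)
=\Li^{q}_{n,k_{2},\dots,k_{r}}(z),
\end{align*}
completing the induction.

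There is no genuine obstacle here; the proof is a direct series manipulation. The only mild subtlety is the justification for swapping the two infinite sums, which is handled by viewing everything in the formal power series ring $\Q[[q,z]]$ where, for each fixed monomial $q^{A}z^{B}$, only finitely many terms contribute. The key algebraic miracle is simply the identity $(1-q)\sum_{j\geq 1}q^{jm}=\frac{q^{m}}{[m]_{q}}$, which is exactly what converts the action of $a$ into an increase of the leading exponent by one.
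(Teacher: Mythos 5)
Your proposal is correct and follows essentially the same route as the paper: induction on $n$, reducing the inductive step to the computation $a\cdot\Li^{q}_{k,k_2,\dots,k_r}(z)=\Li^{q}_{k+1,k_2,\dots,k_r}(z)$ via the geometric-series identity $(1-q)\sum_{j\geq 1}q^{jm}=\frac{q^{m}}{[m]_{q}}$. Your write-up is in fact slightly more careful than the paper's, since you make explicit the interchange of summations and the justification in $\Q[[q,z]]$.
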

\begin{proof}
We show the statement by induction on $n$.
In the case $n=1$, immediately we can get the statement.
In the case $n=s$,
\begin{align*}
a^{s-1} \Li^{q}_{1,e_{k_2},\dots, e_{k_r}}(z) 
&= a \sum_{m_{1} > \tenten > m_{r} > 0} z^{m_{1}}\frac{q^{m_{1}(s-1)}}{[m_{1}]_{q}^{(s-1)}} \prod_{j=2}^r \frac{q^{k_j m_j }}{[m_j]_{q}^{k_j}} \\
&=  (1-q) \sum_{j=1}^{\infty} \sum_{m_{1} > \tenten > m_{r} > 0} (q^{j}z)^{m_{1}} \frac{q^{m_{1}(s-1)}}{[m_{1}]_{q}^{(s-1)}} \prod_{j=2}^r \frac{q^{k_j m_j }}{[m_j]_{q}^{k_j}} \\
&=    \sum_{m_{1} > \tenten > m_{r} > 0} z^{m_1}  \prod_{j=1}^r \frac{q^{k_j m_j }}{[m_j]_{q}^{k_j}} .
\end{align*}
\end{proof}

\begin{prop}(\cite[p. 5]{T})
\label{ex11}
For all $w \in \qgerH^{1}$ we have $\Li_{w}^{q} = w 1.$
\end{prop}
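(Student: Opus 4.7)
By $\mathcal{C}$-linearity of both $\Li^q$ and the map $w\mapsto w\cdot 1$, it suffices to verify the identity for a single monomial. Every non-empty word in $\qgerH^1$ ends in the letter $b$, so it admits a unique factorisation $w = e_{k_1}\cdots e_{k_r} = a^{k_1}b\,a^{k_2}b\cdots a^{k_r}b$ with $r\geq 1$ and $k_i\geq 0$; the degenerate case $w=\ew$ (both sides equal $1$) serves as the base of an induction on the depth $r$.

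For the inductive step I set $v=e_{k_2}\cdots e_{k_r}$, a word of depth $r-1$, so that $w = a^{k_1}b\cdot v$ in $\qgerH$. The inductive hypothesis gives $v\cdot 1 = \Li^q_{k_2,\dots,k_r}(z)$, and I would then apply the definition of the action in two steps. First, the action of $b$ multiplies by $\frac{z}{1-z} = \sum_{s\geq 1}z^s$; substituting the defining series of $\Li^q_{k_2,\dots,k_r}(z)$ and reindexing the outer summation by $n = s+m_1 > m_1$ produces
\begin{align*}
b\cdot\Li^q_{k_2,\dots,k_r}(z) = \Li^q_{0,k_2,\dots,k_r}(z).
\end{align*}
Second, I apply $a^{k_1}$ to this. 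A single application of $a$ contributes a factor $(1-q)\sum_{j\geq 1}q^{jm_1} = \frac{q^{m_1}}{[m_1]_q}$, which shifts the leading index from $0$ to $1$; the preceding lemma (with $n = k_1$, valid for $k_1\geq 1$) then shifts it further up to $k_1$. The case $k_1 = 0$ is trivial since no $a$ is applied. Either way I obtain $w\cdot 1 = \Li^q_{k_1,\dots,k_r}(z) = \Li^q_w$, which completes the induction.

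The only step that requires any real computation is the pair of leading-index-shift identities $b\cdot\Li^q_{k_2,\dots,k_r}(z) = \Li^q_{0,k_2,\dots,k_r}(z)$ and $a\cdot\Li^q_{0,k_2,\dots,k_r}(z) = \Li^q_{1,k_2,\dots,k_r}(z)$; both are short geometric-series manipulations in the same spirit as the calculation displayed in the proof of the preceding lemma. The one detail to check along the way is that every intermediate expression lies in $z\Q[[q,z]]$, so that the action of $a$ remains well-defined; this is automatic, because each inductive application begins with $b$, whose image lies in $z\Q[[q,z]]$, and this subspace is preserved under subsequent applications of $a$. I do not anticipate any conceptual obstacle beyond careful bookkeeping of the indices.
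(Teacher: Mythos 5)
Your proposal is correct and follows essentially the same route as the paper: induction on the depth $r$, with the action of $b$ producing the leading index $0$, a single application of $a$ lifting it to $1$, and the preceding lemma supplying the remaining shift $a^{k_1-1}$. The only difference is organizational — you fold the paper's separate depth-one base case (with its sub-induction on $k$) into the uniform inductive step starting from $w=\ew$, which is a slightly cleaner bookkeeping of the same computation.
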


\begin{proof}
For all $w \in \qgerH^{1}$, we put $w=e_{k_1} \cdots e_{k_r}$ .
We use induction on $r$.
In the case $r=1$, for all $k \in \Z_{>0}$ we have to check $\Li_{e_{k}}^{q} =e_{k}\cdot 1.$
We use induction on $k$.
In the case $k=0$, 
\begin{align*}
\Li_{e_0}^{q}(z)=\sum_{m=1}^{\infty}z^{m} =\frac{z}{1-z}=b\cdot1 \,.
\end{align*}
In the case $k=n$,
\begin{align*}
a^{n}b \cdot 1  &= a \cdot \sum_{m=1}^{\infty} z^{m} \frac{q^{n-1}m}{[m]_{q}^{n-1}} 
= (1-q) \sum_{m=1}^{\infty} \frac{q^{m}}{1-q^{m}} z^{m} \frac{q^{n-1}m}{[m]_{q}^{n-1}} 
= \sum_{m=1}^{\infty} z^{m} \frac{q^{nm}}{[m]_{q}^{n}}.
\end{align*}
Since we can get the statement when $r=1$. 
In the case $r=t$,
\begin{align*}
w \cdot 1 &= e_{k_1}(e_{k_2} \cdots e_{k_{t}})\cdot 1 
= a^{k_{1}-1}(1-q) \sum_{j=1}^{\infty} \sum_{m_{2} > \tenten > m_{t} > 0}  \frac{(q^{j}z)^{m_2+1}}{1-q^{j}z}\prod_{j=2}^r \frac{q^{k_j m_j }}{[m_j]_{q}^{k}} \\
&= a^{k_{1}-1}(1-q) \sum_{d=1}^{\infty} \sum_{j=1}^{\infty} \sum_{m_{2} > \tenten > m_{t} > 0}  (q^{j}z)^{m_2+d} \prod_{j=2}^r \frac{q^{k_j m_j }}{[m_j]_{q}^{k}} 
=  \sum_{m_{1} > \tenten > m_{t} > 0} z^{m_1}\prod_{j=1}^r \frac{q^{k_j m_j }}{[m_j]_{q}^{k}}.
\end{align*}
\end{proof}

\section{Generalized quasi-shuffle products}
\subsection{Definition and results}
In this section, we introduce the generalized quasi-shuffle product, which is the main result of this paper. Also, we organize the conditions of the generalized quasi-shuffle product. Later, we  define the $q$-shuffle product using the generalized quasi-shuffle product. Furthermore, we check that $\zeta_q$ is $\CC$-algebra homomorphism using $q$-shuffle product.
First, we introduce some notation and definitions for explaining the generalized quasi-shuffle product.\\

In the following, we recall that $\mathcal{R}$ is a commutative ring and $L$ is a letter of set.

\bg{df}
We define the $\mathcal{R}$-linear map $F$ by
 \begin{align*}
    \begin{array}{rccc}
F \colon &\RLL       &\longrightarrow& \RL + \mathcal{R} \ew                   \\
        & \alpha_{1}\cdots \alpha_{n}                    & \longmapsto   & \alpha_{1}  \\
        & \ew                   & \longmapsto   & \ew
        \end{array}
\end{align*}
and we define $F^{k}(x)$ for $k \in \Z_{>0}$ and $x \in \RLL$ by 
$$F^{k}(x) :=  \overbrace{F(F( \cdots F}^{k}(x) \cdots )).$$
In the same way as above, we define the $\mathcal{R}$-linear map $\overline{F}$ by 
 \begin{align*}
    \begin{array}{rccc}
\overline{F} \colon &\RLL       &\longrightarrow& \RL + \mathcal{R} \ew                   \\
        & \alpha_{1}\cdots \alpha_{n}                    & \longmapsto   & \alpha_{n}  \\
        & \ew                   & \longmapsto   & \ew
        \end{array}
\end{align*}
\en{df}

\bg{df}
We define the map $R$ by
 \begin{align*}
    \begin{array}{rccc}
R \colon &\RLL       &\longrightarrow& \RLL              \\
        & \sum \limits_{i=1}^{k} c_{i}\alpha_{1,i} \cdots \alpha_{n_{i},i}                     & \longmapsto   & \sum \limits_{i=1}^{l} \beta_{2,i} \cdots \beta_{n_{i},i} ,  
           \end{array}
\end{align*}
where $c_{i,j} \in \CC $, $\alpha_{1, 1} ,\cdots ,\alpha_{n_{k},k}, \beta_{1,1} ,\cdots ,\beta_{n_{l},l}    \in L $. $\sum \limits_{i=1}^{l} \beta_{2,i} \cdots \beta_{n_{i},i}$ is a collection of similar terms of $\sum \limits_{i=1}^{k} \alpha_{2,i} \cdots \alpha_{n_{i},i}$ with the coefficient and the first letter removed.
In particular, $R(c \alpha)=R(c \ew ) =\ew$ for all $c\in \mathcal{R}$, $\alpha \in L$.
We define $R^{k}(x)$ with $k \in \Z_{>0}$, $x \in \RLL$ by
\begin{align*}
R^{k}(x) :=  \overbrace{R(R( \cdots R}^{k}(x) \cdots )).
\end{align*}
In the same way as above, we define the map $\overline{R}$ by 
 \begin{align*}
    \begin{array}{rccc}
\overline{R} \colon &\RLL       &\longrightarrow& \RLL              \\
        & \sum \limits_{i=1}^{k} c_{i}\alpha_{1,i} \cdots \alpha_{n_{i},i}                     & \longmapsto   & \sum \limits_{i=1}^{l} \beta_{1,i} \cdots \beta_{n_{i}-1,i} ,  
           \end{array}
\end{align*}
\en{df}
The map $F$ can be seen as taking the first letter of a word and the map $R$ takes the rest of the word. In contrast, the map $\overline{F}$ can be seen as taking the last letter of a word and the map $\overline{R}$ takes the rest of the word.
Notice that by definition of $F$ and $R$ we have for any word $w\in \RLL$ that $F(w) R(w) = w$ and also  $\overline{R}(w)\overline{F}(w) = w$.
In the classical definition of quasi-shuffle product. (Section \ref{sec:qsh}) we considered a bilinear product $\dia$ on the vector space of letters.
To define the generalized quasi-shuffle product we consider bilinear maps $\diamond:\RL \times \RL \rightarrow \RLL$, i.e. we allow to have words in the image.  

\begin{df}
 We call a $\mathcal{R}$-bilinear map
 \begin{align*}
    \begin{array}{rccc}
\diamond \colon &\mathcal{R} L\times \mathcal{R} L &\longrightarrow&    \mathcal{R}         \langle L \rangle         \\
\end{array}
\end{align*}
\begin{enumerate}[(i)]
    \item \emph{decomposable} if there exists a word $w \in \RLL$ such that for any $\alpha,$ $\beta \in L$ with $R(\alpha \dia \beta) \notin \RL$ we have $w=R(\alpha \dia \beta)$. 
    \item \emph{commutative} if for all $\alpha,\beta \in L$ we have $\alpha \dia \beta = \beta \dia \alpha$\,.
\end{enumerate}

\end{df}
Notice that when $\dia$ is decomposable, we have $F(w\dia v) R(w \dia v) = w \dia v$ for any word $w,v\in \RLL$.


\begin{df}\label{def:generalizedqsh}
For a $\mathcal{R}$-bilinear map $\diamond: \RL \times \RL \rightarrow \RLL$ , we define on $\RLL$ the following $\mathcal{R}$-bilinear product $\gsh$.
For non-empty words $x, y  \in \RLL$, we define $\gsh$ by
\begin{align*}
x \gsh y = F(x) (R(x) \gsh y ) + F(y) (x  \gsh R(y)) + (F(x) \dia F(y))(R(x) \gsh R(y))
\end{align*}
and $\ew \gsh x = x \gsh \ew = x$ for any $x \in \RLL$. We call $\gsh$ the \emph{generalized quasi-shuffle product} associated to $\dia$.
\end{df}
Next, we also define the generalized dual product.

\begin{df}\label{def:rgeneralizedqsh}
For a $\mathcal{R}$-bilinear map $\diamond: \RL \times \RL \rightarrow \RLL$, we define on $\RLL$ the following $\mathcal{R}$-bilinear product $\rgsh$.
For non-empty words $x, y  \in \RLL$, we define $\rgsh$ by
\begin{align*}
x \rgsh y = (\overline{R}(x) \rgsh y )\overline{F}(x)  +  (x  \rgsh \overline{R}(y))\overline{F}(y) + (\overline{R}(x) \rgsh \overline{R}(y))(\overline{F}(x) \dia \overline{F}(y))
\end{align*}
and $\ew \rgsh x = x \rgsh \ew = x$ for any $x \in \RLL$. We call $\rgsh$ the \emph{generalized dual product} associated to $\dia$.
\end{df}

We can prove the next statement in exactly the same way as \cite[Theorem $9$]{Zu2}.
\begin{lem}\label{dualst}
Let $L$ be a letter of set. For a $\mathcal{R}$-bilinear map $\diamond: \RL \times \RL \rightarrow \RLL$, we have,
\begin{align*}
    x\gsh y=x\rgsh y
\end{align*}
for $x,y \in \RLL$.
\end{lem}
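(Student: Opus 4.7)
I would proceed by induction on the total length $|x|+|y|$, following the same scheme as Zudilin's argument in the classical case. The base case $x=\ew$ or $y=\ew$ is immediate from the empty-word conventions in Definitions~\ref{def:generalizedqsh} and~\ref{def:rgeneralizedqsh}.

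For the inductive step, both $x$ and $y$ are non-empty. Expand $x\gsh y$ via Definition~\ref{def:generalizedqsh}:
\begin{align*}
x\gsh y = F(x)(R(x)\gsh y) + F(y)(x\gsh R(y)) + (F(x)\dia F(y))(R(x)\gsh R(y)).
\end{align*}
Each inner product involves a pair of words of strictly smaller total length, so by the induction hypothesis the inner $\gsh$'s may be replaced by $\rgsh$'s. Now unfold each of these inner $\rgsh$'s one more step using Definition~\ref{def:rgeneralizedqsh}, producing nine summands. A symmetric computation starting from $x\rgsh y$ --- apply Definition~\ref{def:rgeneralizedqsh} once on the right, convert the three inner $\rgsh$'s to $\gsh$'s via the induction hypothesis, then apply Definition~\ref{def:generalizedqsh} once on the left --- also yields nine summands. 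Using the identities $w = F(w)R(w) = \overline{R}(w)\overline{F}(w)$ for every non-empty $w$ and the associativity of concatenation, the two lists of nine terms match up term by term, completing the induction.

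Equivalently, one can give a non-recursive combinatorial description of both products: call a \emph{generalized quasi-shuffle} of $x=\alpha_1\cdots\alpha_m$ and $y=\beta_1\cdots\beta_n$ an order-preserving interleaving of the letters of $x$ and $y$ together with an order-preserving partial matching between letters of $x$ and letters of $y$, where a matched pair $(\alpha_i,\beta_j)$ is replaced by the word $\alpha_i\dia\beta_j$ in its position. The recursion of Definition~\ref{def:generalizedqsh} corresponds to grouping generalized quasi-shuffles by the fate of the first letter (of $x$, of $y$, or merged), while the recursion of Definition~\ref{def:rgeneralizedqsh} corresponds to grouping by the fate of the last letter. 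By induction both recursions unfold to the same combinatorial sum, so $x\gsh y = x\rgsh y$.

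The main obstacle is the book-keeping in the inductive step: the nine-versus-nine matching requires some care in the degenerate cases $|x|=1$ or $|y|=1$, where $F(x)=\overline{F}(x)$ and $R(x)=\ew$, so that several of the nine terms coincide or vanish. It is also worth noting that the argument uses no properties of $\dia$ beyond $\mathcal{R}$-bilinearity --- in particular, neither commutativity nor the decomposability condition is invoked --- which is consistent with the full generality of the statement.
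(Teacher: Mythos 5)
Your proposal is correct and follows essentially the same route as the paper: induction on $l(x)+l(y)$, expanding one step with the left recursion, converting the inner products via the induction hypothesis, unfolding with the right recursion, and matching the resulting nine terms against the symmetric expansion (with the degenerate cases $l(x)=1$ or $l(y)=1$ treated separately, exactly as in the paper's cases (ii)--(iv)). Your observation that only $\mathcal{R}$-bilinearity of $\dia$ is used also matches the paper's statement, which imposes no further hypotheses on $\dia$.
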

\begin{proof}
    We use induction on $l(x)+l(y)$.
    In the case $l(x)+l(y)=0$, immediately we get the statement.
    \begin{enumerate}[(i)]
        \item In the case $x$ or $y$ is an empty word, it is clear by Definition $\ref{def:generalizedqsh}$ and $\ref{def:rgeneralizedqsh}$.
        
        \item In the case $l(x)=1$ and $l(y)>1$,
        there exists $a,b,c \in L$ and $y' \in \RLL$ such that $x=a$ and $y=by'c$.
        Then, we have
        \begin{align*}
        a \gsh by'c &= aby'c+b(a\gsh y'c)+(a\dia b)y'c \\
        &= aby'c+b(a\rgsh y'c)+(a\dia b)y'c \\
        &= aby'c+b(y'ca+(a\rgsh y')c+y'(a\dia c))+(a\dia b)y'c \\
        &= by'ca+(aby'+b(a\rgsh y')+(a\dia b)y')c+by'(a\dia c) \\
        &=by'ca+(aby'+b(a\gsh y')+(a\dia b)y')c+by'(a\dia c) \\
        &=by'ca+(a\gsh by')c+by'(a\dia c) \\
        &=by'ca+(a\rgsh by')c+by'(a\dia c) \\
        &= a \rgsh by'c
        \end{align*}

        \item In the case $l(x)>1$ and $l(y)=1$, this case is similar to (ii).

        \item In the case $l(x),l(y)>1$, there exists $a,b,c,d \in L$ and $x',y' \in \RLL$ such that $x=ax'b$ and $y=cy'd$.
        Then, we have
        \begin{align*}
        ax'b \gsh cy'd &= a(x'b \gsh cy'd)+c(ax'b \gsh y'd)+(a \dia c)(x'b \gsh y'd) \\
        &= a(x'b \rgsh cy'd)+c(ax'b \rgsh y'd)+(a \dia c)(x'b \rgsh y'd) \\
        &= a((x' \rgsh cy'd)b+(x'b \rgsh cy')d+(x' \rgsh cy')(b\dia d)) \\
        &\  \ +c((ax' \rgsh y'd)b+(ax'b \rgsh y')d+(ax' \rgsh y')(b\dia d)) \\
        &\  \ +(a \dia c)((x' \rgsh y'd)b+(x'b \rgsh y')d+(x' \rgsh y')(b\dia d)) \\
        &= a((x' \gsh cy'd)b+(x'b \gsh cy')d+(x' \gsh cy')(b\dia d)) \\
        &\  \ +c((ax' \gsh y'd)b+(ax'b \gsh y')d+(ax' \gsh y')(b\dia d)) \\
        &\  \ +(a \dia c)((x' \gsh y'd)b+(x'b \gsh y')d+(x' \gsh y')(b\dia d)) \\
        &= (a(x'\gsh cy'd)+c(ax' \gsh y'd)+(a \dia c)(x' \gsh y'd))b \\
        & \ \ +(a(x'b \gsh cy')+c(ax'b \gsh y')+(a \dia c)(x'b \gsh y'))d\\
        & \ \ +(a(x' \gsh cy'+c(ax' \gsh y')+(a \dia c)(x' \gsh y'))(b\dia d) \\
        &= (ax'\gsh cy'd)b+(ax'b\gsh cy')d+(ax'\gsh cy')(b\dia d) \\
        &= (ax'\rgsh cy'd)b+(ax'b\rgsh cy')d+(ax'\rgsh cy')(b\dia d) \\
        &=ax'b \rgsh cy'd
\end{align*}
    \end{enumerate}
\end{proof}
In the following our goal will be to understand when the $\gsh$ is associative. For this we will need to prove some lemmas.

\begin{lem}
\label{l1}
For a commutative $\mathcal{R}$-bilinear map $\diamond: \RL \times \RL \rightarrow \RLL$, the following two statements are equivalent.
\begin{enumerate}[(i)]
\item
For $\alpha , \beta \in L$, $x,y \in \RLL$, we have
\begin{align*}
(R^{k}(\alpha \dia \beta)x\gsh y) = F(R^{k}(\alpha \dia \beta)x)(R^{k+1}(\alpha \dia \beta)x \gsh y)
\end{align*}
for any $k \in \Z$ with $1 \ika k < l(\alpha,\beta)$.
\item
For all $\alpha'$, $\beta' \in L$ with $l(\alpha', \beta')\ijo 2$, we have
\begin{align}
\label{joken1}
       \gamma \dia  F(R^{k'}(\alpha' \dia \beta')) = -\gamma  F(R^{k'}(\alpha' \dia \beta'))
\end{align}
for any $\gamma \in L$, and $k' \in \Z$ with $1 \ika k' < l(\alpha' , \beta')$.
\end{enumerate}
\end{lem}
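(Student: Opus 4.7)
The plan is to unfold the recursive definition of $\gsh$ on both sides of (i) and compare the terms that appear. Setting $w = R^{k}(\alpha \dia \beta)$, which is a non-empty word for $1 \le k < l(\alpha,\beta)$, one has $F(wx) = F(w)$ and $R(wx) = R(w)x$, and $R^{k+1}(\alpha\dia\beta) = R(w)$. Applying the definition of $\gsh$ to $(wx) \gsh y$ for any words $x, y$ yields
\begin{equation*}
(wx) \gsh y = F(w)(R(w)x \gsh y) + F(y)(wx \gsh R(y)) + (F(w) \dia F(y))(R(w)x \gsh R(y)).
\end{equation*}
So condition (i) is exactly the statement that the sum of the last two terms vanishes for every choice of $x, y \in \RLL$.

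For the direction (ii)$\Rightarrow$(i), I would induct on $l(y)$. The case $y = \ew$ is immediate since both sides of (i) equal $wx$. For $y \ne \ew$, the induction hypothesis applied to the shorter word $R(y)$ (with the same $x$ and $w$) rewrites $(wx \gsh R(y))$ as $F(w)(R(w)x \gsh R(y))$, so the second term in the expansion above becomes $F(y) F(w)(R(w)x \gsh R(y))$. Meanwhile, applying (ii) with $\gamma = F(y)$ together with the commutativity of $\dia$ gives $F(w) \dia F(y) = F(y) \dia F(w) = -F(y)F(w)$, so the third term becomes $-F(y) F(w)(R(w)x \gsh R(y))$. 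The two terms cancel, and (i) follows.

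For the converse (i)$\Rightarrow$(ii), I would specialize (i) to $x = \ew$, $y = \gamma \in L$ and $w = R^{k'}(\alpha' \dia \beta')$ for $1 \le k' < l(\alpha',\beta')$. Expanding $w \gsh \gamma$ via the definition gives
\begin{equation*}
w \gsh \gamma = F(w)(R(w) \gsh \gamma) + \gamma w + (F(w) \dia \gamma)R(w),
\end{equation*}
so (i) forces $\gamma w + (F(w) \dia \gamma)R(w) = 0$. Writing $w = F(w)R(w)$, this becomes $\bigl(\gamma F(w) + F(w) \dia \gamma\bigr)R(w) = 0$ in $\RLL$. Since $\RLL$ is free as an $\mathcal{R}$-module on the set of words, right multiplication by any fixed word is injective, so I may cancel $R(w)$ on the right (the edge case $k' = l(\alpha',\beta')-1$, where $R(w) = \ew$, is handled uniformly) to obtain $F(w) \dia \gamma = -\gamma F(w)$. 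Commutativity of $\dia$ then gives $\gamma \dia F(w) = F(w) \dia \gamma = -\gamma F(w)$, which is precisely (ii).

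The main obstacle I expect is verifying the right-cancellation step in (i)$\Rightarrow$(ii) carefully, since $F(w) \dia \gamma$ is a priori an element of $\RLL$ rather than a single letter, so one really needs the injectivity of right multiplication by $R(w)$ in the free noncommutative polynomial algebra. Everything else is routine bookkeeping inside the recursion once the key identity $F(w) \dia F(y) = -F(y) F(w)$ is in hand.
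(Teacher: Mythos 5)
Your proof is correct and follows essentially the same route as the paper: unfold the recursion for $\gsh$, use induction to reduce the middle term, and identify condition (ii) with the vanishing of the sum of the last two terms. Your (i)$\Rightarrow$(ii) direction is in fact slightly cleaner than the paper's, since specializing to $x=\ew$, $y=\gamma$ reduces the cancellation to right multiplication by the single word $R(w)$, which is genuinely injective on $\RLL$, whereas the paper cancels the more complicated factor $R(R^{k}(\alpha \dia \beta)x)\gsh R(y)$ without justification.
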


\begin{proof}
$(\Leftarrow)$
\begin{enumerate}[(I)]
\item In the case $l(\alpha, \beta)=0,1$, we don't have to consider these cases since a condition which is $1 \ika k < l(\alpha,\beta)$ of (i).
\item In the case $l(\alpha, \beta) \ijo 2$, we use induction on $l(x)+l(y)$.\\
In the case $l(x)+l(y)=0$, we can get the statement immediately.
In the case $l(x)+l(y)=n$, we calculate directly
\begin{align*}
(R^{k}(\alpha \dia \beta)x) \gsh y = &F(R^{k}(\alpha \dia \beta)x) (R(R^{k}(\alpha \dia \beta)x) \gsh y ) \color{black}+ F(y)( (R^{k}(\alpha \dia \beta)x) \gsh R(y)) \\
&\color{black}+ ( F(R^{k}(\alpha \dia \beta)x) \dia F(y))(R(R^{k}(\alpha \dia \beta)x) \gsh R(y) ).
\end{align*}
By the induction hypothesis,
\begin{align*}
F(\alpha \dia \beta) &\seki_{j=1}^{k-1} F(R^{j}(\alpha \dia \beta)x)(R^{k}(\alpha \dia \beta)x\gsh R(y))\\
&= F(\alpha \dia \beta)\seki_{j=1}^{k}F(R^{j}(\alpha \dia \beta)x)(R^{k+1}(\alpha \dia \beta)x \gsh R(y)). \\
 \Leftrightarrow R^{k}(\alpha \dia \beta)x\gsh R(y) &= F(R^{k}(\alpha \dia \beta)x)(R^{k+1}(\alpha \dia \beta)x \gsh R(y)). 
\end{align*}
Then,
\begin{align*}
&\color{black}F(y)( (R^{k}(\alpha \dia \beta)x) \gsh R(y)) + ( F(R^{k}(\alpha \dia \beta)x) \dia F(y))(R(R^{k}(\alpha \dia \beta)x) \gsh R(y) ) \\
= F(y) &F(R^{k}(\alpha \dia \beta)x)(R^{k+1}(\alpha \dia \beta)x \gsh R(y))\\
 &+  ( F(R^{k}(\alpha \dia \beta)x) \dia F(y))(R(R^{k}(\alpha \dia \beta)x) \gsh R(y) ) \cdots (**)
\end{align*}

Now we assume that in the case $l(\alpha ,\beta) \ijo 2$,
\begin{align*}
       \gamma \dia  F(R(\alpha \dia \beta)) = -\gamma  F(R(\alpha \dia \beta)) \\
\end{align*}
for any $\gamma \in L $. In particular, for $\gamma=F(y)$ we get
\begin{align*}
(**) &= \big( F(y) F (R (\alpha \dia \beta) x) + F(R(\alpha \dia \beta)x) \dia F(y) \big)(R (R (\alpha \dia \beta )x) \gsh R(y))  \\
&= 0.
\end{align*}
Then, we have $$(R^{k}(\alpha \dia \beta)x\gsh y) = F(R^{k}(\alpha \dia \beta)x)(R^{k+1}(\alpha \dia \beta)x \gsh y).$$
\end{enumerate}

$(\Rightarrow)$
We assume that there exist  $\alpha$, $\beta \in L$ such that $l(\alpha, \beta)\ijo 2$.
For $k\in \Z$ with $1 \ika k < l(\alpha,\beta)$,
we calculate directly
\begin{align*}
(R^{k}(\alpha \dia \beta)x) \gsh y = &F(R^{k}(\alpha \dia \beta)x) (R(R^{k}(\alpha \dia \beta)x) \gsh y ) +\color{black} F(y)( (R^{k}(\alpha \dia \beta)x) \gsh R(y)) \\
&\color{black}+ ( F(R^{k}(\alpha \dia \beta)x) \dia F(y))(R(R^{k}(\alpha \dia \beta)x) \gsh R(y) ).
\end{align*}
By assumption of (i), we get 
\begin{align*}
 (R^{k}(\alpha \dia \beta)x\gsh y) = F(R^{k}(\alpha \dia \beta)x)(R^{k+1}(\alpha \dia \beta)x \gsh y).
\end{align*}
Therefore, we can get
\begin{align*}
&\color{black}F(y)( (R^{k}(\alpha \dia \beta)x) \gsh R(y)) + ( F(R^{k}(\alpha \dia \beta)x) \dia F(y))(R(R^{k}(\alpha \dia \beta)x) \gsh R(y) ) \\
&= F(y) F (R^{k} (\alpha \dia \beta) x) (R (R^{k} (\alpha \dia \beta )x) \gsh R(y)) +  ( F(R^{k}(\alpha \dia \beta)x) \dia F(y))(R(R^{k}(\alpha \dia \beta)x) \gsh R(y) ) \\
&= ( F(y) F (R^{k} (\alpha \dia \beta) x) +( F(R^{k}(\alpha \dia \beta)x) \dia F(y)) )(R (R^{k} (\alpha \dia \beta )x) \gsh R(y))= 0. \\
&\Leftrightarrow	F(y) F (R^{k} (\alpha \dia \beta) ) +( F(R^{k}(\alpha \dia \beta)) \dia F(y)) = 0.
\end{align*}
Then, we can get
$$ \gamma \dia  F(R(\alpha \dia \beta)) = -\gamma  F(R(\alpha \dia \beta)) $$
when $l(\alpha , \beta) \ijo 2$. 
\end{proof}

\begin{prop}
\label{p1}
Let $\alpha, \beta \in L$ be letters and the commutative and decomposable $\mathcal{R}$-bilinear map $\diamond: \RL \times \RL \rightarrow \RLL$ satisfy $(ii)$ in Lemma \ref{l1}.
Assume that $l(\alpha \dia \beta) \ijo 2$ and $\gamma_{1} , \gamma_{2} \in L \backslash F(R(\alpha \dia \beta))$.
Then, we have \\
$$\gamma_{1} \dia \gamma_{2} \in \RL  \  \quad\text{or}\quad \  F(R(\alpha \dia \beta))=F(R(\gamma_1 \dia \gamma_2)).$$
\end{prop}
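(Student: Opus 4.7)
The plan is to establish the proposition by contrapositive: I would assume $\gamma_1 \dia \gamma_2 \notin \RL$ and show that $F(R(\gamma_1 \dia \gamma_2)) = F(R(\alpha \dia \beta))$. Set $\nu := F(R(\alpha \dia \beta))$ and $\mu := F(R(\gamma_1 \dia \gamma_2))$; these are both well-defined letters of $L$ provided $l(\gamma_1 \dia \gamma_2) \ijo 2$, which should follow from $\gamma_1 \dia \gamma_2 \notin \RL$ once one excludes the degenerate possibility that $\gamma_1 \dia \gamma_2$ lies in $\mathcal{R}\ew$. The goal then reduces to showing $\mu = \nu$.

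The heart of the argument is to apply Lemma \ref{l1}(ii) twice in a symmetric fashion. First I would take $(\alpha', \beta') = (\alpha, \beta)$, $k' = 1$, $\gamma = \mu$, which is permitted by the hypothesis $l(\alpha \dia \beta) \ijo 2$, to obtain
\begin{align*}
\mu \dia \nu = -\mu\nu.
\end{align*}
Next I would take $(\alpha', \beta') = (\gamma_1, \gamma_2)$, $k' = 1$, $\gamma = \nu$, permitted because $l(\gamma_1 \dia \gamma_2) \ijo 2$, to obtain
\begin{align*}
\nu \dia \mu = -\nu\mu.
\end{align*}
The commutativity of $\dia$ identifies the two left-hand sides, giving $\mu\nu = \nu\mu$ as elements of $\RLL$. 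Since $\mu\nu$ and $\nu\mu$ are length-$2$ monomials determined uniquely by their letter sequences, this forces $\mu = \nu$, which is exactly the claim.

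The main obstacle I foresee lies not in the symmetry argument but in justifying its setup. One needs to use decomposability (together with implicit conventions of the paper) to guarantee that $\alpha \dia \beta$ and $\gamma_1 \dia \gamma_2$ are single monomials up to scalar, so that $F \circ R$ returns a genuine letter rather than a polynomial with multiple tail words. Decomposability also offers an alternative route when both products have length at least $3$: in that case $R(\alpha \dia \beta)$ and $R(\gamma_1 \dia \gamma_2)$ both coincide with the fixed word $w$ from the definition of decomposability, so the leading letters automatically agree. Finally, the hypothesis $\gamma_1, \gamma_2 \neq F(R(\alpha \dia \beta))$ appears to be present to isolate the nontrivial case: were $\gamma_1 = \nu$ (say), Lemma \ref{l1}(ii) applied directly to $(\alpha, \beta)$ with $\gamma = \gamma_2$ would already pin down $\gamma_1 \dia \gamma_2 = -\gamma_2 \nu$, a length-$2$ word whose second letter is $\nu$, and the conclusion would hold tautologically.
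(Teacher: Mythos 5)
Your argument is correct and is essentially the paper's own proof: the paper likewise assumes $l(\gamma_1 \dia \gamma_2) \ijo 2$, applies condition (ii) of Lemma \ref{l1} once with $(\alpha,\beta)$ and the letter $F(R(\gamma_1\dia\gamma_2))$, once with $(\gamma_1,\gamma_2)$ and the letter $F(R(\alpha\dia\beta))$, and compares the two resulting length-two monomials via commutativity of $\dia$. Your extra remarks on decomposability guaranteeing that $F(R(\cdot))$ is a genuine letter, and on the degenerate $\mathcal{R}\ew$ case, only make explicit what the paper leaves implicit.
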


\begin{proof}
We assume that $l(\gamma_1 \dia \gamma_2) \ijo 2 $ with $\gamma_1$, $\gamma_2 \in L \backslash F(R(\alpha \dia \beta))$.
By the assumption of $\dia$, we have 
$$F(R(\gamma_1 \dia \gamma_2)) \dia x = x \dia F(R(\gamma_1 \dia \gamma_2)) = -xF(R(\gamma_1 \dia \gamma_2))$$
for $x \in L$.
We put $x=F(R(\alpha \dia \beta))$. Then,
\begin{align}
\label{f1}
F(R(\gamma_1 \dia \gamma_2)) \dia F(R(\alpha \dia \beta) = F(R(\alpha \dia \beta) \dia F(R(\gamma_1 \dia \gamma_2)) = -F(R(\alpha \dia \beta)F(R(\gamma_1 \dia \gamma_2)).
\end{align}
Furthermore by $l(\alpha \dia \beta) \ijo 2$, for $y \in L$ we have
$$F(R(\alpha_1 \dia \beta_2)) \dia y = y \dia F(R(\alpha_1 \dia \beta_2)) = -yF(R(\alpha_1 \dia \beta_2)).$$
Then we put $y=F(R(\gamma_1 \dia \gamma_2))$. We have
\begin{align}
\label{f2}
F(R(\alpha_1 \dia \beta_2)) \dia F(R(\gamma_1 \dia \gamma_2)) = F(R(\gamma_1 \dia \gamma_2)) \dia F(R(\alpha_1 \dia \beta_2)) = -F(R(\gamma_1 \dia \gamma_2))F(R(\alpha_1 \dia \beta_2)).
\end{align}
Comparing (\ref{f1}) and (\ref{f2}) we get
$F(R(\alpha \dia \beta))=F(R(\gamma_1 \dia \gamma_2))$.
\end{proof}

\begin{prop}\label{p2}
Let $\alpha, \beta$ be a letter in $L$ and let $k=l(\alpha,\beta)$. Assume the commutative and decomposable $\mathcal{R}$-bilinear map $\diamond: \RL \times \RL \rightarrow \RLL$ satisfies (ii) in Lemma \ref{l1}. Then we have
\begin{align*}
R(\alpha \dia \beta) = \overbrace{F(R(\alpha \dia \beta)) \cdots   F(R(\alpha \dia \beta))}^{k-1}.
 \end{align*}
\end{prop}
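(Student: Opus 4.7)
The plan is a direct application of hypothesis (ii) of Lemma \ref{l1} together with the commutativity of $\dia$. First I would dispose of the small cases: when $k = l(\alpha \dia \beta) \ika 2$, the statement reduces to tautologies, namely $R(\alpha\dia\beta)=\ew$ (when $k\ika 1$) or $R(\alpha\dia\beta)=F(R(\alpha\dia\beta))$ (when $k=2$), so no hypothesis on $\dia$ is needed.

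For the main case $k \ijo 3$, I would write $\alpha \dia \beta = a_1 a_2 \cdots a_k$, where $a_j := F(R^{j-1}(\alpha \dia \beta))$, so that the target identity becomes $a_2 = a_3 = \cdots = a_k$. Applying (ii) of Lemma \ref{l1} to $(\alpha', \beta') = (\alpha, \beta)$, with $k'$ running through $\{1, \ldots, k-1\}$, yields
\begin{align*}
\gamma \dia a_j = -\gamma\, a_j
\end{align*}
for every $\gamma \in L$ and every $j \in \{2, \ldots, k\}$.

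Next, for any fixed $i, j \in \{2, \ldots, k\}$, I would substitute $\gamma = a_i$ into the identity indexed by $j$ to get $a_i \dia a_j = -a_i a_j$, and substitute $\gamma = a_j$ into the identity indexed by $i$ to get $a_j \dia a_i = -a_j a_i$. Commutativity of $\dia$ equates the two left-hand sides, so $a_i a_j = a_j a_i$ as length-two monomials in $\RLL$; since distinct words are $\mathcal{R}$-linearly independent, this forces $a_i = a_j$ as elements of $L$. Therefore $a_2 = \cdots = a_k = F(R(\alpha \dia \beta))$, which is exactly what is claimed.

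The only conceptual step, and what I would call the main (and rather mild) obstacle, is spotting the symmetry between substituting $a_i$ and $a_j$ into the two instances of the anti-commutativity identity coming from (ii) of Lemma \ref{l1}: once this symmetry is combined with the commutativity of $\dia$, the whole word $R(\alpha \dia \beta)$ immediately collapses onto a single repeated letter. I note in passing that the decomposability of $\dia$ plays no role in this particular argument; only commutativity and condition (ii) of Lemma \ref{l1} are actually used.
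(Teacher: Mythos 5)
Your proof is correct and follows essentially the same route as the paper: dispose of $k\leq 2$ trivially, then for each pair of letters of $R(\alpha\dia\beta)$ substitute one into the anti-commutativity identity of Lemma \ref{l1}(ii) indexed by the other, and use commutativity of $\dia$ to force the two letters to coincide. Your remark that decomposability is not actually needed here is a fair observation, but the argument itself matches the paper's.
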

\begin{proof}
\begin{enumerate}[(i)]
\item In the case $l(\alpha \dia \beta)= 1,2$, we can get the statement immediately.
\item In the case $l(\alpha \dia \beta)> 2$, we have 
by Lemma $\ref{l1}$
\begin{align*}
\gamma \dia F(R^{k}(\alpha\dia\beta))=-\gamma F(R^{k}(\alpha \dia \beta))\\
\gamma \dia F(R^{l}(\alpha\dia\beta))=-\gamma F(R^{l}(\alpha \dia \beta))
\end{align*}
for all $\gamma \in L$ and $n,m \in \Z$ with $0 \ika n,m \ika l(\alpha,\beta)$ and $n \neq m$.
Then, we can get the statement by comparing the case in which we put  $\gamma=F(R^{k}(\alpha \dia \beta))$ and the case in which we put $\gamma=F(R^{l}(\alpha \dia \beta))$.
\end{enumerate}
\end{proof}

\begin{lem}
\label{l2}
Let $\alpha, \beta , \gamma$ be in $L$ and $|L| \ika 2$.
For a commutative and decomposable $\mathcal{R}$-bilinear map $\diamond: \RL \times \RL \rightarrow \RLL$, we have
\begin{align*}
(F(\alpha \dia \beta)\dia \gamma)R(\alpha \dia \beta)=(\alpha \dia F(\beta \dia \gamma))R(\beta \dia \gamma)
\end{align*}
whenever 
\begin{align}
\label{joken2}
\begin{cases}
F(F(\alpha \dia \beta)\dia \gamma)=F(\alpha \dia F(\beta \dia \gamma)), &\mathrm{if }\max \{l(\alpha , \beta) \mid \alpha, \beta \in L\} =1. \\
\dia \ \mathrm{satisfies} \  \mathrm{(ii)}\ \mathrm{in}\ \mathrm{Lemma} \ \ref{l1} \  ,&\mathrm{if }\max \{l(\alpha , \beta) \mid \alpha, \beta \in L  \}  >1.
\end{cases}
\end{align}

\end{lem}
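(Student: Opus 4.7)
I would split the argument along the two cases of condition \eqref{joken2}.

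In the first case, where $\max l(\alpha' \dia \beta') = 1$ over all letters $\alpha', \beta' \in L$, every value of $\dia$ lies in $\RL$; consequently $F$ acts as the identity on these values and each remainder under $R$ collapses to $\ew$. The LHS and RHS of the claim therefore reduce respectively to $(\alpha \dia \beta) \dia \gamma$ and $\alpha \dia (\beta \dia \gamma)$, both of which again lie in $\RL$. The stated hypothesis $F(F(\alpha \dia \beta) \dia \gamma) = F(\alpha \dia F(\beta \dia \gamma))$ then becomes exactly the desired equality, since $F$ is the identity on $\RL$.

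In the second case, I would first unpack the structural consequences of the hypothesis. Decomposability of $\dia$ together with Proposition \ref{p2} shows that every product of length $\geq 2$ has the shape $F(\alpha' \dia \beta') \cdot \eta^{l-1}$, where $l$ is its length and $\eta \in L$ is a fixed ``tail letter'' independent of $\alpha', \beta'$. Condition (ii) of Lemma \ref{l1} then reads $\mu \dia \eta = -\mu \eta$ for every $\mu \in L$, and by commutativity of $\dia$ also $\eta \dia \mu = -\mu \eta$. With these identities in hand, I would case-split on whether each of $l(\alpha \dia \beta)$ and $l(\beta \dia \gamma)$ equals $1$ or is $\geq 2$, in every sub-case rewriting any length-$\geq 2$ factor via Proposition \ref{p2} and expanding the inner $\mu \dia \eta$ using the sign identity. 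The hypothesis $|L| \leq 2$ enters through Proposition \ref{p1}: it forces the first letters of any length-$\geq 2$ product to lie in a very restricted set, so only finitely many symbolic patterns arise and each sub-case reduces to direct computation.

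The main obstacle will be the mixed sub-case in which exactly one of $\alpha \dia \beta$ or $\beta \dia \gamma$ has length $1$ and the other $\geq 2$. There the LHS and RHS look structurally asymmetric, and matching them demands careful use of commutativity of $\dia$, the sign identity $\mu \dia \eta = -\mu \eta$, and the restriction $|L| \leq 2$ to pin down the possible shapes of the length-$1$ product. I expect the bulk of the book-keeping to live in that sub-case, while the fully-length-$\geq 2$ and fully-length-$1$ sub-cases follow by more routine expansion of Definition \ref{def:generalizedqsh} combined with Proposition \ref{p2}.
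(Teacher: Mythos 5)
Your plan matches the paper's proof: the first branch of \eqref{joken2} is handled exactly as you describe (both sides collapse to elements of $\RL$ on which $F$ acts as the identity, so the stated hypothesis is literally the desired equality), and the second branch is a finite case analysis driven by Propositions \ref{p1} and \ref{p2}, the sign identity $\gamma \dia M = -\gamma M$ from Lemma \ref{l1}(ii) for the distinguished tail letter $M$, commutativity of $\dia$, and the restriction $|L| \ika 2$. The only difference is organizational: the paper indexes its sub-cases by which of $\alpha, \beta, \gamma$ coincide with $M = F(R(x \dia y))$ rather than by the lengths of $\alpha \dia \beta$ and $\beta \dia \gamma$, but the two decompositions cover the same configurations and each sub-case reduces to the same short computations.
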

\begin{proof}
\begin{enumerate}[(I)]
\item In the case $l(\alpha, \beta)=0,1$ for $\alpha, \beta \in L$.\\
Immediately, we can get the statement since $F(F(\alpha \dia \beta)\dia \gamma)=F(\alpha \dia F(\beta \dia \gamma))$.
\item In the case $\max \{l(\alpha , \beta) \mid \alpha, \beta \in L  \}  >1$, there exists $x, y \in L$ such that $l(x \dia y) \ijo 2$.\\
We put $M=F(R(x \dia y))$.
For all $\alpha, \beta , \gamma \in L$, we check $$(F(\alpha \dia \beta)\dia \gamma)R(\alpha \dia \beta)=(\alpha \dia F(\beta \dia \gamma))R(\beta \dia \gamma)$$
for different cases separately. 
\begin{enumerate}
\item In the case $\beta =M$ we get by Lemma $\ref{l1}$,
\begin{align*}
\alpha \dia \beta = - \alpha M.
\end{align*}
Then, we have
\begin{align*}
(F(\alpha \dia \beta)\dia \gamma)R(\alpha \dia \beta) = (- \alpha \dia \gamma) M \,.
\end{align*}
By Lemma $\ref{l1}$,
\begin{align*}
\beta \dia \gamma = - \gamma M\,.
\end{align*}
Then, we have
\begin{align*}
(\alpha \dia F(\beta \dia \gamma))R(\beta \dia \gamma)= (- \alpha \dia \gamma) M \,.
\end{align*}
Therefore,
\begin{align*}
(F(\alpha \dia \beta)\dia \gamma)R(\alpha \dia \beta)=(\alpha \dia F(\beta \dia \gamma))R(\beta \dia \gamma)\,.
\end{align*}

\item The case $\beta \neq M$:
\begin{enumerate}[(i)]
\item $\alpha=\gamma=M$,
By Lemma $\ref{l1}$,
\begin{align*}
\alpha \dia \beta = - \alpha M \,.
\end{align*}
Then, we have
\begin{align*}
(F(\alpha \dia \beta)\dia \gamma)R(\alpha \dia \beta) = (- \alpha \dia \gamma) M \,.
\end{align*}
By Lemma $\ref{l1}$,
\begin{align*}
\beta \dia \gamma = - \gamma M\,.
\end{align*}
Then, we have
\begin{align*}
(\alpha \dia F(\beta \dia \gamma))R(\beta \dia \gamma)= (- \alpha \dia \gamma) M \,.
\end{align*}
Therefore,
\begin{align*}
(F(\alpha \dia \beta)\dia \gamma)R(\alpha \dia \beta)=(\alpha \dia F(\beta \dia \gamma))R(\beta \dia \gamma)\,.
\end{align*}

\item $\alpha=M$, $\gamma \neq M$,
by Proposition $\ref{p1}$, $\ref{p2}$, there exists $A \in L$ and $n \in \Z_{\ijo 0}$ such that $\beta \dia \gamma = - A M^{n}$.
By Lemma $\ref{l1}$, we have
\begin{align*}
(\alpha \dia F(\beta \dia \gamma))R(\beta \dia \gamma)= ( \alpha \dia -A) M^{n} = ( M \dia -A) M^{n}=AM^{n+1}.
\end{align*}
By Lemma $\ref{l1}$,
\begin{align*}
\alpha \dia \beta = - \beta M\,.
\end{align*}
Then, we have
\begin{align*}
(F(\alpha \dia \beta)\dia \gamma)R(\alpha \dia \beta) = (- \beta \dia \gamma) M = AM^{n+1}\,.
\end{align*}
Therefore,
\begin{align*}
(F(\alpha \dia \beta)\dia \gamma)R(\alpha \dia \beta)=(\alpha \dia F(\beta \dia \gamma))R(\beta \dia \gamma).
\end{align*}

\item $\alpha \neq M$, $\gamma = M$, we can get the statement since $\dia$ is commutative and (ii).

\item $\alpha \neq M$, $\gamma \neq M$,
This case is all same letters since $|L|\ika 2$.( i.e. $\alpha=\beta=\gamma$.)
By Proposition $\ref{p1}$, $\ref{p2}$, there exist $C \in L$ and $p \in \Z_{\ijo 0}$ such that $\alpha \dia \beta = - C M^{p}$, i.e.
\begin{align*}
(F(\alpha \dia \alpha)\dia \alpha)R(\alpha \dia \alpha) = (- C \dia \alpha) M^{p}, \\
(\alpha \dia F(\alpha \dia \alpha))R(\alpha \dia \alpha)=(\alpha \dia -C)M^{p}.
\end{align*}
Therefore,
\begin{align*}
(F(\alpha \dia \beta)\dia \gamma)R(\alpha \dia \beta)=(\alpha \dia F(\beta \dia \gamma))R(\beta \dia \gamma).
\end{align*}
\end{enumerate}
\end{enumerate}
\end{enumerate}
\end{proof}

\begin{df}
We call a decomposable $\mathcal{R}$-bilinear map $\diamond: \RL \times \RL \rightarrow \RLL$ \emph{associative} if for any $\alpha,\beta,\gamma \in L$
\begin{align*}
    (F(\alpha \dia \beta)\dia \gamma)R(\alpha \dia \beta)=(\alpha \dia F(\beta \dia \gamma))R(\beta \dia \gamma)
\end{align*}
and for  $\alpha,\beta \in L$ with $l(\alpha,\beta) \geq 2$, we have for $\gamma \in L $
\begin{align*}
    \gamma \dia F(R(\alpha \dia \beta))= 
        -\gamma F(R(\alpha \dia \beta)).
\end{align*} 
\end{df}

Notice that if $\alpha \diamond \beta \in \mathcal{R}L$ for all $\alpha, \beta \in L$, then $\diamond$ being associative just means $$\alpha\diamond (\beta \diamond \gamma) = (\alpha\diamond \beta) \diamond \gamma$$ for all $\alpha,\beta,\gamma \in L$.

\begin{thm}
\label{main}
If $\diamond$ is associative
then the associated generalized quasi-shuffle product $\gsh$ is associative.
\end{thm}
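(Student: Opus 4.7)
My plan is to mimic Hoffman's classical proof of associativity of quasi-shuffle products, using induction on the total length $N = l(x) + l(y) + l(z)$, while carefully tracking the new feature that $\alpha \dia \beta$ may be a word of length $\geq 2$. Whenever one of $x$, $y$, $z$ is empty (in particular, for the base case $N=0$), associativity is immediate from $\ew \gsh w = w$. For the inductive step with all three words nonempty, I would expand both $(x \gsh y) \gsh z$ and $x \gsh (y \gsh z)$ using Definition \ref{def:generalizedqsh} and bilinearity: first expand the inner $\gsh$, then expand each resulting outer $\gsh$. Since this produces intermediate expressions whose ``first letter'' may in fact be a word of length $\geq 2$, I also want to use Lemma \ref{dualst} as needed to read off leading or trailing blocks without ambiguity.

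After this double expansion, each side becomes a formal nine-term sum: three ``single-letter-leading'' terms starting with $F(x)$, $F(y)$, or $F(z)$; three ``mixed'' terms whose leading block is a word $F(u) \dia F(v)$ for a pair $\{u,v\} \subset \{x,y,z\}$; and one ``triple'' term. The three single-letter-leading terms match directly between the two sides by the induction hypothesis applied to shuffles of strictly shorter total length. The three mixed terms look more delicate because, when $F(u) \dia F(v)$ has length $\geq 2$, the outer shuffle must leave the tail $R(F(u) \dia F(v))$ untouched and only act past it. This sliding is exactly Lemma \ref{l1}(i), whose hypothesis (ii) is part of the definition of $\dia$ being associative, so the mixed terms also match pairwise across the two sides.

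The main obstacle is matching the triple terms. On the left the term takes the form $\bigl(F(F(x)\dia F(y)) \dia F(z)\bigr)\, R(F(x) \dia F(y)) \cdot w$, and on the right the form $\bigl(F(x) \dia F(F(y) \dia F(z))\bigr)\, R(F(y) \dia F(z)) \cdot w$, where $w = (R(x) \gsh R(y)) \gsh R(z) = R(x) \gsh (R(y) \gsh R(z))$ is unambiguous by induction. The equality of these two prefixes is exactly Lemma \ref{l2}, whose hypotheses are supplied by the associativity of $\dia$; inside that proof Propositions \ref{p1} and \ref{p2} are what pin down the tails $R(F(x) \dia F(y))$ and $R(F(y) \dia F(z))$ as powers of a common letter whenever either has length $\geq 2$, so that the two triple terms genuinely equal one another as words. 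Once this identification is in place, the nine-term sums on both sides coincide and the induction closes. The bookkeeping I anticipate being most error-prone is the second expansion: after the first step the leading ``block'' may be a full diamond word, and I need to apply Lemma \ref{l1}(i) carefully to factor that block out of the outer shuffle before recursing, so that the three branches of the outer recursion line up with the three expected families of terms on the other side.
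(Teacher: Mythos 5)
Your proposal follows essentially the same route as the paper's proof: induction on the total length, a nine-term double expansion of both sides, direct matching of the single-letter-leading terms via the induction hypothesis, Lemma \ref{l1}(i) (guaranteed by the second condition in the definition of associativity of $\dia$) to slide the outer product past the tail $R(F(u)\dia F(v))$ in the mixed terms, and the prefix identity for the triple term. One small correction: that prefix identity $(F(\alpha\dia\beta)\dia\gamma)R(\alpha\dia\beta)=(\alpha\dia F(\beta\dia\gamma))R(\beta\dia\gamma)$ is itself part of the definition of $\dia$ being associative, so you should invoke that hypothesis directly rather than Lemma \ref{l2}, which is only a verification criterion and carries the extra assumption $|L|\le 2$ not present in the theorem.
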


\begin{proof}
Let $\alpha, \beta, \gamma \in  \RLL$.
We need to check $\alpha \gsh (\beta \gsh \gamma)= (\alpha \gsh \beta)\gsh \gamma$.
We use induction on $l(\alpha)+l(\beta)+l(\gamma)$.\\
In the case $l(\alpha)+l(\beta)+l(\gamma)=1$, immediately we can get the statement.\\
In the case $l(\alpha)+l(\beta)+l(\gamma)=k$,
\begin{align*}
&\alpha \gsh (\beta \gsh \gamma) \\
&= \alpha \gsh (F(\beta)(R(\beta)\gsh \gamma)+F(\gamma)(\beta \gsh R(\gamma))+(F(\beta)\dia F(\gamma))(R(\beta)\gsh R(\gamma)))\\
&= \color{black}F(\alpha)(R(\alpha)\gsh  F(\beta)(R(\beta)\gsh \gamma)) 
\color{black}+F(\beta)(\alpha \gsh (R(\beta) \gsh \gamma))
\color{black}+(F(\alpha) \dia F(\beta))(R(\alpha)\gsh (R(\beta) \gsh \gamma))\\
&\color{black}+F(\alpha)(R(\alpha)\gsh F(\gamma)(\beta \gsh R(\gamma)))
\color{black}+F(\gamma)(\alpha \gsh (\beta \gsh R(\gamma)))
\color{black}+(F(\alpha) \dia F(\gamma))(R(\alpha)\gsh (\beta \gsh R(\gamma))) \\
&\color{black}+F(\alpha)(R(\alpha)\gsh (F(\beta)\dia F(\gamma))(R(\beta)\gsh R(\gamma)))
\color{black}+F(F(\beta)\dia F(\gamma))(\alpha \gsh R(F(\beta) \dia F(\gamma))(R(\beta)\gsh R(\gamma))) \\
&\color{black}+(F(\alpha)\dia F(F(\beta)\dia F(\gamma)))(R(\alpha) \gsh R((F(\beta)\dia F(\gamma))(R(\beta)\gsh R(\gamma)))).
\end{align*}
\begin{align*}
&(\alpha \gsh \beta)\gsh \gamma \\
&= 
(F(\alpha)(R(\alpha)\gsh \beta)+F(\beta)(\alpha \gsh R(\beta))+(F(\alpha \dia \beta)(R(\alpha)\gsh R(\beta))))\gsh \gamma \\
&= \color{black}F(\alpha)((R(\alpha)\gsh \beta)\gsh \gamma) 
+ \color{black}F(\gamma)(F(\alpha)(R(\alpha)\gsh \beta)\gsh R(\gamma))
+ \color{black}(F(\alpha)\dia F(\gamma))((R(\alpha)\gsh \beta)\gsh R(\gamma)) \\
&+\color{black}F(\beta)((\alpha \gsh R(\beta))\gsh \gamma)
+\color{black}F(\gamma)(F(\beta)(\alpha \gsh R(\beta))\gsh R(\gamma))
\color{black}+(F(\beta)\dia F(\gamma))((\alpha \gsh R(\beta))\gsh R(\gamma)) \\
&\color{black}+F(F(\alpha)\dia F(\beta))(R(F(\alpha)\dia F(\beta))((R(\alpha)\gsh R(\beta))\gsh \gamma))
+\color{black}F(\gamma)((F(\alpha)\dia F(\beta))(R(\alpha)\gsh R(\beta))\gsh R(\gamma)) \\
&\color{black}+(F(F(\alpha) \dia F(\beta))\dia F(\gamma))(R(F(\alpha) \dia F(\beta))(R(\alpha)\gsh R(\beta))\gsh R(\gamma))\,.
\end{align*}
Immediately, we can get the satisfies by an induction hypothesis. 
\begin{align*}
\color{black}F(\beta)(\alpha \gsh (R(\beta) \gsh \gamma)) &= \color{black}F(\beta)((\alpha \gsh R(\beta))\gsh \gamma), \\
\color{black}(F(\alpha) \dia F(\gamma))(R(\alpha)\gsh (\beta \gsh R(\gamma))) &= \color{black}(F(\alpha)\dia F(\gamma))((R(\alpha)\gsh \beta)\gsh R(\gamma)) \, .
\end{align*}
\begin{align*}
\color{black}F(\gamma)(\alpha \gsh (\beta \gsh R(\gamma))) &= \color{black}F(\gamma)((\alpha \gsh \beta) \gsh R(\gamma)) \\
&= \color{black}F(\gamma)(F(\alpha)(R(\alpha)\gsh \beta)\gsh R(\gamma))+\color{black}F(\gamma)(F(\beta)(\alpha \gsh R(\beta))\gsh R(\gamma)) \\
&+\color{black}F(\gamma)((F(\alpha)\dia F(\beta))(R(\alpha)\gsh R(\beta))\gsh R(\gamma))\, .
\end{align*}
\begin{align*}
    \color{black}F(\alpha)((R(\alpha)\gsh \beta)\gsh \gamma) &= \color{black}F(\alpha)(R(\alpha)\gsh ( \beta\gsh \gamma)) \\
    &= \color{black}F(\alpha)((R(\alpha)\gsh \beta)\gsh \gamma) + \color{black}F(\alpha)(R(\alpha)\gsh F(\gamma)(\beta \gsh R(\gamma))) \\
    \color{black}
    &+F(\alpha)(R(\alpha)\gsh (F(\beta)\dia F(\gamma))(R(\beta)\gsh R(\gamma)))\,.
\end{align*}
By $\dia$ satisfies decomposable, we have
\begin{align*}
F(\alpha) \dia F(\beta) = F(F(\alpha)\dia F(\beta))R(F(\alpha)\dia F(\beta))\, .
\end{align*}

By Lemma $\ref{l1}$
\begin{align*}
    \color{black}(F(\alpha) \dia F(\beta))(R(\alpha)\gsh (R(\beta) \gsh \gamma))=\color{black}F(F(\alpha)\dia F(\beta))(R(F(\alpha)\dia F(\beta))((R(\alpha)\gsh R(\beta))\gsh \gamma))\,.
\end{align*}
\begin{align*}
    \color{black}F(F(\beta)\dia F(\gamma))(\alpha \gsh R(F(\beta) \dia F(\gamma))(R(\beta)\gsh R(\gamma)))= \color{black}(F(\beta)\dia F(\gamma))((\alpha \gsh R(\beta))\gsh R(\gamma))\,.
\end{align*}
By assumption of $\dia$ and Lemma $\ref{l1}$, we have
\begin{align*}
    &\color{black}(F(\alpha)\dia F(F(\beta)\dia F(\gamma)))(R(\alpha) \gsh R((F(\beta)\dia F(\gamma))(R(\beta)\gsh R(\gamma)))) \\
    &=\color{black}(F(F(\alpha) \dia F(\beta))\dia F(\gamma))(R(F(\alpha) \dia F(\beta))(R(\alpha)\gsh R(\beta))\gsh R(\gamma))\,.
\end{align*}
\end{proof}
 
The following can be seen as a generalization of Theorem \ref{thm:quasishufflealgebra}.

\begin{thm}\label{thm:main2}
If $\diamond$ is commutative and associative, then the space $\mathcal{R}\langle L \rangle$ equipped with $\gsh$ becomes a commutative $\mathcal{R}$-algebra.
\end{thm}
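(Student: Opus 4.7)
The plan is to observe that Theorem \ref{thm:main2} has essentially three ingredients: the existence of a unit, $\mathcal{R}$-bilinearity, and the two algebra axioms (associativity and commutativity). The first two are immediate from Definition \ref{def:generalizedqsh}: the product $\gsh$ is defined as an $\mathcal{R}$-bilinear map and the empty word $\ew$ is declared to be a two-sided unit. Associativity of $\gsh$ under the assumption that $\diamond$ is associative is exactly the content of Theorem \ref{main}, so this requires no further argument. What remains is to establish commutativity of $\gsh$ under the assumption that $\diamond$ is commutative.

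To prove $x \gsh y = y \gsh x$ for all $x,y \in \mathcal{R}\langle L\rangle$, I would reduce by $\mathcal{R}$-bilinearity to the case where $x$ and $y$ are words and then induct on $l(x) + l(y)$. The base case $l(x) + l(y) = 0$ is trivial since $\ew \gsh \ew = \ew$, and the cases where one of the words is empty are immediate from the unit property. For the inductive step with both $x$ and $y$ nonempty, I would expand both sides using Definition \ref{def:generalizedqsh}:
\begin{align*}
x \gsh y &= F(x)(R(x) \gsh y) + F(y)(x \gsh R(y)) + (F(x) \dia F(y))(R(x) \gsh R(y)),\\
y \gsh x &= F(y)(R(y) \gsh x) + F(x)(y \gsh R(x)) + (F(y) \dia F(x))(R(y) \gsh R(x)).
\end{align*}
Then I would match the three summands term-by-term: the first summand of $x\gsh y$ matches the second of $y\gsh x$ by the induction hypothesis applied to $R(x)\gsh y = y\gsh R(x)$; the second of $x\gsh y$ matches the first of $y\gsh x$ similarly; and the third summands agree by combining the induction hypothesis $R(x)\gsh R(y) = R(y)\gsh R(x)$ with the assumption that $\diamond$ is commutative, which gives $F(x)\dia F(y) = F(y)\dia F(x)$.

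I do not expect any real obstacle here, since the proof parallels the classical quasi-shuffle argument of \cite{H,HI} almost verbatim — the only novelty is that $F(x)\dia F(y)$ is now potentially a word rather than a single letter, but this causes no difficulty because the commutativity hypothesis on $\diamond$ already produces the equality of these word-valued prefactors. Thus the only genuine work has already been carried out in Theorem \ref{main}, and Theorem \ref{thm:main2} follows by combining that result with the short inductive verification of commutativity sketched above.
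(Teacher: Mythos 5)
Your proposal is correct and follows exactly the intended route: the paper states Theorem \ref{thm:main2} without proof, treating it as an immediate consequence of Theorem \ref{main} together with the evident symmetry of Definition \ref{def:generalizedqsh}, and your short induction on $l(x)+l(y)$ for commutativity (matching the three summands term-by-term and using $F(x)\dia F(y)=F(y)\dia F(x)$) supplies precisely the details being omitted. No gaps.
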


\begin{ex}\label{ex:3}
 Let $L = \{ a, b \}$ and $\mathcal{R}=\mathcal{C}$ define $\dia$ by
 \begin{align*}
    \begin{array}{rccc}
\dia \colon &\mathcal{C} L\times\mathcal{C} L &\longrightarrow&    \mathcal{C}         \langle L \rangle         \\
        & (a,b)                   & \longmapsto   & -ab  \\
        & (b,a)                   & \longmapsto   & -ab  \\
        & (b,b)                   & \longmapsto   & -bb  \\
        & (a,a)                   & \longmapsto   & -abb  \\
           \end{array}
\end{align*}
First, we check this $\dia$ is associative. 
Immediately, we can get the following.
For $\alpha,\beta \in L$ with $l(\alpha,\beta) \geq 2$, we have for $\gamma \in L $
\begin{align*}
    \gamma \dia F(R(\alpha \dia \beta))= 
        -\gamma F(R(\alpha \dia \beta)).
\end{align*} 
Since Lemma $\ref{l2}$, we have
\begin{align*}
    (F(\alpha \dia \beta)\dia \gamma)R(\alpha \dia \beta)=(\alpha \dia F(\beta \dia \gamma)R(\beta \dia \gamma))
\end{align*}
for $\alpha,\beta,\gamma \in L$.
Then, we can use Theorem $\ref{main}$.
For example, we have
\begin{align*}
(a \gsh b) \gsh b &= (ab+ba-ab)\gsh b\\
&= ba \gsh b = b(a \gsh b)+ bba -bba = bba,\\[5pt]
a \gsh( b \gsh b) &= a \gsh (bb + bb -bb)=a \gsh  bb \\
&= abb + b(a \gsh  b)-abb = bba.
\end{align*}
Therefore we have $(a \gsh b) \gsh b = a \gsh( b \gsh b) .$ Also, since
\begin{align*}
(a \gsh a) \gsh b &= (aa+aa-abb)\gsh b\\
&= (2aa-abb) \gsh b = 2a(a \gsh b)+ 2bba -2aba-(a(bb \gsh b)+babb-abbb) \\
&= 2aba+2baa-2aba-(a(b(b \gsh b)+bbb-bbb)+babb-abbb) \\
&= 2baa - babb,\\[5pt]
a \gsh( a \gsh b) &=a \gsh  ab 
= aba + b(a \gsh  a)-aba = b(2aa-abb)=2baa-babb,
\end{align*}
we get $(a \gsh a) \gsh b = a \gsh( a \gsh b) $.
\end{ex}

Next, we give another example often called \emph{integral shuffle product} using the generalized quasi-shuffle product. We can see the integral shuffle product in \cite{T0} 
\begin{ex}[\cite{T0} Section 2.3]\label{ex:4}
Let $L = \{ a, b, c \}$ and $\mathcal{R}=\mathcal{C}$ define $\dia$ by
 \begin{align*}
    \begin{array}{rccc}
\dia \colon &\mathcal{C} L\times\mathcal{C} L &\longrightarrow&    \mathcal{C}         \langle L \rangle         \\
        & (a,b),(b,a)                  & \longmapsto   & 0  \\
        & (b,b)                   & \longmapsto   & -bc  \\
        & (a,a)                   & \longmapsto   & \hbar a  \\
        & (c,a), (a,c)                   & \longmapsto   & -ac  \\
        & (c,b), (b,c)                   & \longmapsto   & -bc  \\
        & (c,c)                   & \longmapsto   & -cc  \\
           \end{array}
\end{align*}
Also here we see that this $\dia$ is associative. For example,
\begin{align*}
    (F(a\dia c)\dia b)R(a\dia c)&=(-a\dia b)c=0 \\
    (a\dia F(c\dia b))R(c\dia b)&=(a \dia -b)c=0.\\
    \\
    (F(a\dia a)\dia c)R(a\dia a)&=\hbar a\dia c=-\hbar ac \\
    (a\dia F(a\dia c))R(a\dia c)&=(a \dia -a)c=-\hbar ac.\\
\end{align*}
Therefore the associated $\gsh$ is associative.
For example, we have
\begin{align*}
    (a \gsh b)\gsh c &= (ab+ba)\gsh c \\
    &= cab+a(b \gsh c)-acb+cba+b(a \gsh c)-bca\\
    &=cab+a(bc+cb-bc)-acb+cba+b(ac+ca-ac)-bca\\
    &=cab+acb-acb+cba+bca-bca\\
    &=cab+cba.\\
    \\
    a \gsh (b \gsh c)&= a \gsh (bc + cb -bc) \\
    &= a \gsh cb \\
    &= acb + c(a \gsh b)-acb \\
    &= c(ab+ba)\\
    &= cab+cba.
\end{align*}
\end{ex}
\subsection{Shuffle product for q-analogues}
\label{Shuffle:q}
In this section, we introduce the $q$-shuffle product as an example of the generalized quasi-shuffle product and check that $\zeta_q$ is a $\CC$-algebra homomorphism with respect to the $q$-shuffle product. After this, we show how the $q$-shuffle product can be expressed in terms of the $q$-stuffle product. \\
First, we define $q$-shuffle product using the generalized quasi-shuffle product.
\begin{df}\label{def:qshuffle}
\begin{enumerate}[(i)]
    \item 
For $L=\{a,b\}$ we define $\dia_{q}$ by
 \begin{align*}
    \begin{array}{rccc}
\dia_{q} \colon &\mathcal{C} L\times\mathcal{C} L &\longrightarrow&    \mathcal{C}         \langle L \rangle         \\
        & (a,b)                   & \longmapsto   & -ab  \\
        & (b,a)                   & \longmapsto   & -ab  \\
        & (b,b)                   & \longmapsto   & -bb  \\
        & (a,a)                   & \longmapsto   & \hbar a  \\
           \end{array}
\end{align*}
\item We define $\shuffle_{q}$ as the generalized quasi-shuffle product associated to $\diamond=\dia_{q}$. 
\end{enumerate}
\end{df}

It is easy to see that $\diamond_q$ is commutative and associative and therefore, by Theorem \ref{thm:main2}, the space $\qgerH$ equipped with $\shuffle_q$ becomes a commutative $\mathcal{C}$-algebra. Notice that $\qgerH^0$ and $\qgerH^1$ are both closed under $\shuffle_q$, i.e. these are subalgebras of $\qgerH$.
 
\begin{prop}
\label{ex12}
For all $f,g \in z\Q [[ q,z ]]$ and $\alpha , \beta \in L$, we have 
\begin{align*}
( \alpha f)( \beta g)= \alpha f (\beta g) + \beta (\alpha f )g + (\alpha \dia_q \beta )(f g).
\end{align*}
\end{prop}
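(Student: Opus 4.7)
The plan is to prove the identity by a direct case analysis on the four pairs $(\alpha,\beta)\in\{a,b\}^2$, expanding both sides using the formulas $a\cdot h = (1-q)\sum_{j\geq 1} h\vert_{z=q^j z}$ and $b\cdot h = \frac{z}{1-z}\,h$ from Section~\ref{Multiple_q-poly} together with the table defining $\diamond_q$ in Definition~\ref{def:qshuffle}. To streamline notation I would introduce the shift operator $T_j\colon h(z)\mapsto h(q^j z)$, record that $T_j T_k = T_{j+k}$ and that each $T_j$ is a ring homomorphism of $\Q[[q,z]]$, and rewrite the $a$-action as $a\cdot h = (1-q)\sum_{j\geq 1} T_j h$ for $h\in z\Q[[q,z]]$.

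The three cases involving at least one $b$ are straightforward. If $\alpha=\beta=b$, both $bf(bg)$ and $b(bf)g$ collapse to $\frac{z^2}{(1-z)^2}fg$, and the third term $(b\diamond_q b)(fg) = -bb\cdot(fg) = -\frac{z^2}{(1-z)^2}fg$ cancels one of them, leaving $(bf)(bg)$. In the mixed cases $(\alpha,\beta)\in\{(a,b),(b,a)\}$, the choice $a\diamond_q b = b\diamond_q a = -ab$ is engineered precisely so that $(\alpha\diamond_q\beta)(fg) = -ab(fg)$ cancels whichever of $\alpha f(\beta g)$ or $\beta(\alpha f)g$ itself equals $ab(fg)$; the remaining summand then visibly reproduces $(\alpha f)(\beta g)$ because the $b$-action is multiplication by $\frac{z}{1-z}$ and commutes past the other factor.

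The main case is $\alpha=\beta=a$. Expanding yields $(af)(ag) = (1-q)^2\sum_{j,k\geq 1} T_j f\cdot T_k g$, and the strategy is to split this double sum along the three pieces $\{j<k\}$, $\{j=k\}$, $\{j>k\}$. Using $T_k(ag) = (1-q)\sum_{m>k} T_m g$ one computes $af(ag) = (1-q)^2\sum_{j<k} T_j f\cdot T_k g$ after relabeling, and analogously $a(af)g = (1-q)^2\sum_{j>k} T_j f\cdot T_k g$; the diagonal contributes $(1-q)^2\sum_{k\geq 1} T_k f\cdot T_k g = (1-q)\,a\cdot(fg) = \hbar\,a(fg) = (a\diamond_q a)(fg)$, using that $T_k$ is multiplicative. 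The main obstacle is the bookkeeping in this step: tracking how the shift $T_k$ introduced by the outer $a$-action translates the inner summation index of $ag$ (or $af$), and making sure the resulting decomposition hits each of the three right-hand summands exactly once. Once this is set up, assembling the three pieces completes the proof.
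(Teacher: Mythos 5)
Your proposal is correct and follows essentially the same route as the paper's proof: a case analysis over $(\alpha,\beta)\in\{a,b\}^2$ in which the three cases involving $b$ reduce to the cancellation $(\alpha\diamond_q\beta)(fg)=-ab\cdot(fg)$ (resp.\ $-bb\cdot(fg)$) against the matching term, and the $(a,a)$ case is handled by splitting the double sum $(1-q)^2\sum_{i,j}f\vert_{z=q^iz}\,g\vert_{z=q^jz}$ into the pieces $i>j$, $i<j$, $i=j$ and identifying each with a summand of the right-hand side. Your shift-operator notation $T_j$ is only a cosmetic streamlining of the same computation.
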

\begin{proof}
We check the statement by direct calculation.
\begin{enumerate}[(i)]
\item $\alpha = \beta = a$
\begin{align*} 
(af)(ag) &=(1-q)\sum_{i=1}^{\infty} f|_{z=q^iz} (1-q)\sum_{j=1}^{\infty} g|_{z=q^jz} = (1-q)^{2} \sum_{i,j=1}^{\infty} f|_{z=q^jz} g|_{z=q^jz} \\
&= (1-q)^{2} ( \sum_{i>j>0} f|_{z=q^iz}  g|_{z=q^jz} + \sum_{j>i>0} f|_{z=q^iz}  g|_{z=q^jz} + \sum_{i=1}^{\infty} f|_{z=q^iz} g|_{z=q^iz} ) \\
&= (1-q)^{2} ( \sum_{r=1}^{\infty} \sum_{j=1}^{\infty} f|_{z=q^{j+r}z}  \ g|_{z=q^jz} + \sum_{d=1}^{\infty}\sum_{i=1}^{\infty} f|_{z=q^iz}  \ g|_{z=q^{i+d}z} + \sum_{i=1}^{\infty} f|_{z=q^jz} \ g|_{z=q^jz} ) \\
&= a ( (1-q) \sum_{j=1}^{\infty} f|_{z=q^{j}z} \ g) + a ( (1-q)  f \sum_{i=1}^{\infty}   \ g|_{z=q^{i}z}) + a ( (1-q) fg) \\
&= a(af)g+af(ag)+(1-q)a fg \\
&= \alpha f (\beta g) + \beta (\alpha f )g + (\alpha \dia \beta )(f g).
\end{align*}

\item $\alpha = a$, $\beta = b$,
\begin{align*}
a \cdot f (b \cdot g) &= a \cdot \left( f \frac{z}{1-z} g \right). \\
(a \dia b) \cdot (fg) &= -ab \cdot (fg) = -a \cdot( \frac{z}{1-z} fg ) = -a \cdot \left( f \frac{z}{1-z} g \right).
\end{align*}
From this we get
$$( \alpha f)( \beta g)= \alpha f (\beta g) + \beta (\alpha f )g + (\alpha \dia \beta )(f g).$$

\item $\alpha = b$, $\beta = b$,
\begin{align*}
b \cdot f (b \cdot g) &= b \cdot \left( f \frac{z}{1-z} g \right). \\
(b \dia b) \cdot (fg) &= -bb \cdot (fg) = -b \cdot( \frac{z}{1-z} fg ) = -b \cdot \left( f \frac{z}{1-z} g \right).
\end{align*}
From this we get
$$( \alpha f)( \beta g)= \alpha f (\beta g) + \beta (\alpha f )g + (\alpha \dia \beta )(f g).$$

\end{enumerate}
\end{proof}

 \begin{thm}
 \label{Liq:is:hom}
 The map $\Li^{q}$ is a $\mathcal{C}$-algebra homomorphisms.
 \end{thm}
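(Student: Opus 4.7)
The plan is to prove $\Li^q(w \shuffle_q v) = \Li^q(w) \Li^q(v)$ for all $w, v \in \qgerH^1$ by induction on $l(w) + l(v)$. By Proposition \ref{ex11}, $\Li^q(u) = u \cdot 1$ for $u \in \qgerH^1$, so the target identity becomes
\begin{align*}
(w \cdot 1)(v \cdot 1) = (w \shuffle_q v) \cdot 1.
\end{align*}
Combined with $\CC$-linearity of $\Li^q$ and $\Li^q(\ew) = 1$, this gives the $\CC$-algebra homomorphism statement.

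The base case is immediate: if $w = \ew$ or $v = \ew$, then $w \shuffle_q v$ equals the other word and $\Li^q(\ew) = 1$. For the inductive step with both $w, v$ nonempty, I would write $w = F(w)R(w)$ and $v = F(v)R(v)$. In the generic case, when $R(w)$ and $R(v)$ are both nonempty, they still lie in $\qgerH^1$ (since $w, v$ end in $b$), so $f := R(w) \cdot 1$ and $g := R(v) \cdot 1$ both lie in $z\Q[[q,z]]$. Setting $\alpha = F(w)$ and $\beta = F(v)$, Proposition \ref{ex12} yields
\begin{align*}
(w \cdot 1)(v \cdot 1) = (\alpha f)(\beta g) = \alpha(f(\beta g)) + \beta((\alpha f)g) + (\alpha \dia_q \beta)(fg).
\end{align*}
Recognizing the inner function products as $\Li^q(R(w)) \Li^q(v)$, $\Li^q(w) \Li^q(R(v))$, and $\Li^q(R(w)) \Li^q(R(v))$, the inductive hypothesis rewrites each as $(R(w) \shuffle_q v) \cdot 1$, $(w \shuffle_q R(v)) \cdot 1$, and $(R(w) \shuffle_q R(v)) \cdot 1$, respectively. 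Using the evident compatibility $\gamma \cdot (u \cdot 1) = (\gamma u) \cdot 1$ for a letter or word $\gamma$, the three terms reassemble exactly according to Definition \ref{def:generalizedqsh} (applied to $\shuffle_q$) into $(w \shuffle_q v) \cdot 1$.

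The remaining edge case is $R(w) = \ew$, or symmetrically $R(v) = \ew$. Then $w$ is a single letter ending in $b$, forcing $w = b$. Proposition \ref{ex12} does not apply here since $1 \notin z\Q[[q,z]]$, so one verifies $(b \cdot 1)(v \cdot 1) = (b \shuffle_q v) \cdot 1$ directly: unfold $b \shuffle_q v = bv + F(v)(b \shuffle_q R(v)) + (b \dia_q F(v)) R(v)$ via Definition \ref{def:generalizedqsh}, use the inductive hypothesis on $(b \shuffle_q R(v)) \cdot 1 = (b \cdot 1)(R(v) \cdot 1)$, and check cancellation by cases on $F(v) \in \{a, b\}$. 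The required cancellation reduces to $F(v) \cdot (\tfrac{z}{1-z} h) + (b \dia_q F(v)) \cdot h = 0$ where $h = R(v) \cdot 1$, which follows immediately from $b \dia_q a = -ab$, $b \dia_q b = -bb$, and the fact that $b$ acts as multiplication by $\tfrac{z}{1-z}$. The sub-subcase $R(v) = \ew$ as well reduces to the trivial check $(b \cdot 1)^2 = (bb) \cdot 1$.

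The main obstacle is this edge case, since Proposition \ref{ex12} was stated only for $f, g \in z\Q[[q,z]]$ and hence cannot be invoked when a factor is the empty word. Once this subtlety is handled separately, the argument is a clean structural induction whose terms directly mirror the recursive definition of $\shuffle_q$.
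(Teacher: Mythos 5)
Your proposal is correct and follows essentially the same route as the paper: induction on $l(w)+l(v)$, using Proposition \ref{ex11} to identify $\Li^q(u)$ with $u\cdot 1$ and Proposition \ref{ex12} to expand the product of the two leading letters, then reassembling via the recursion defining $\shuffle_q$. The only difference is that you explicitly treat the edge case $R(w)=\ew$ (i.e.\ $w=b$), where Proposition \ref{ex12} does not literally apply because $1\notin z\Q[[q,z]]$ --- a point the paper's proof passes over silently --- and your direct verification of that case is correct.
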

\begin{proof}
We use an induction step on $l(w)+l(v)$. In the case $l(w)+l(v)=1$, immediately we can get the statement. \\
In the case $l(w)+l(v)=2$, the statement follows from Proposition \ref{ex11}.
We assume that the statement holds when $l(w)+l(v)=n-1$.
In the case $l(w)+l(v)=n$,
 \item we put $w=\alpha w'$ and $v= \beta v'$ $(\alpha , \beta \in \{ a, b\})$
since  Proposition $\ref{ex11}$, $\ref{ex12}$,
 \begin{align*}
 \Li^{q}_{\alpha w'}(z) \Li^{q}_{bv'}(z) &= \alpha \Li^{q}_{w'}(z) (\beta \Li^{q}_{v'}(z)) + \beta (\alpha \Li^{q}_{w'}(z))\Li^{q}_{v'}(z) + (\alpha \dia \beta )(\Li^{q}_{w'}(z) \Li^{q}_{v'}(z)) \\
 &= \alpha \Li^{q}_{w' \shuffle_{q} \beta v' }(z) + \beta \Li^{q}_{ \alpha w' \shuffle_{q} v'}(z) + \Li^{q}_{(\alpha \dia \beta )w' \shuffle v'}(z) \\
 &= \Li^{q}_{\alpha (w' \shuffle_{q} \beta v') }(z) +  \Li^{q}_{ \beta (\alpha w' \shuffle_{q} v')}(z) + \Li^{q}_{(\alpha \dia \beta )(w' \shuffle v')}(z) \\
 &= \Li^{q}_{\alpha w' \shuffle_{q} \beta v'} (z).
 \end{align*}
\end{proof}

\begin{coro}
For $w$, $u \in \qgerH^{0}$ we have
\begin{align*}
\zeta_{q}(w)\zeta_{q}(u)=\zeta_{q}(w\shuffle_q u).
\end{align*}
In particular, the map $\zeta_q$ is a $\mathcal{C}$-algebra homomorphism from $\qgerH^{0}$ to $\Q[[q]]$. 
\end{coro}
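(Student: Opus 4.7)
The plan is to deduce this corollary from Theorem~\ref{Liq:is:hom} by evaluating the generating series at $z=1$. Since $\qgerH^0 \subset \qgerH^1$ and $\qgerH^0$ is closed under $\shuffle_q$ (as noted after Definition~\ref{def:qshuffle}), applying Theorem~\ref{Liq:is:hom} to $w,u \in \qgerH^0$ yields the identity
\begin{align*}
\Li^{q}_{w}(z)\, \Li^{q}_{u}(z) = \Li^{q}_{w \shuffle_q u}(z)
\end{align*}
in $\Q[[q,z]]$, with $w\shuffle_q u \in \qgerH^0$. The task is then to specialize $z=1$ and recognize the two sides as $\zeta_q(w)\zeta_q(u)$ and $\zeta_q(w\shuffle_q u)$, respectively.

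The point requiring a brief justification is that the substitution $z \mapsto 1$ is well-defined on the sub-$\mathcal{C}$-algebra generated by $\{\Li^q_w(z) \mid w \in \qgerH^0\}$ and lands in $\Q[[q]]$. For any word $w=e_{k_1}\cdots e_{k_r} \in \qgerH^0$ we have $k_1 \geq 1$, so in the defining sum for $\Li^{q}_{k_1,\dots,k_r}(z)$ every term carries a factor $q^{k_1 m_1}$ with $k_1 m_1 \geq m_1$. Hence, after setting $z=1$, the coefficient of each fixed $q^N$ in the resulting series receives contributions only from finitely many tuples $(m_1,\dots,m_r)$ (those with $m_1 \leq N$), so the substitution produces a well-defined element of $\Q[[q]]$ which by definition equals $\zeta_q(w)$.

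Because the evaluation map $\mathrm{ev}_{z=1}$ is a $\mathcal{C}$-algebra homomorphism wherever it is defined, applying it to both sides of the previous displayed identity yields
\begin{align*}
\zeta_q(w)\,\zeta_q(u) = \zeta_q(w \shuffle_q u)\,,
\end{align*}
which is the desired statement; the ``in particular'' clause follows since $\zeta_q$ is $\mathcal{C}$-linear by definition. The main (minor) obstacle is verifying the convergence argument above so that the $z=1$ specialization is legitimate; this is the only step that is not immediate from Theorem~\ref{Liq:is:hom}, and it is standard for Schlesinger--Zudilin type $q$-series on admissible words.
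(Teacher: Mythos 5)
Your proof is correct and follows exactly the paper's argument, which simply cites Theorem~\ref{Liq:is:hom} and lets $z \rightarrow 1$. The only difference is that you additionally justify why the specialization $z=1$ yields a well-defined element of $\Q[[q]]$ for admissible words (using $k_1 \geq 1$), a detail the paper leaves implicit.
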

\begin{proof}
We can get the statement by Theorem $\ref{Liq:is:hom}$ and $z \rightarrow 1$.
\end{proof}

\subsection{The dual stuffle product for multiple zeta values}\label{dual:stuffle}

In \cite{EMS} and \cite{Zu} the authors consider a dual stuffle product defined for $w,v \in \gerH^0$ by $$\tau(\tau(w) \ast \tau(v)),$$ where $\tau$ denotes the classical duality defined in Definition \ref{def:tau} (In \cite{EMS} this product is denoted $m_\square$). In this section, we will show that the dual stuffle product can be described by using our notion of generalized quasi-shuffle product. For this, we first give the following more general version of Proposition $\ref{q_dual}$ from the previous section.
\begin{prop}\label{generalized_dual}
Let $L=\{a,b\}$ and $E=\{ e_{0},e_{1},e_{2}, \dots \}$.
Assume that for $k,l \geq 0$ the commutative $\mathcal{C}$-bilinear map $\diamond_{E}: \CL \times \CL \rightarrow \CLL$ of fixed length is given by
\begin{align*}
e_{k} \dia_{E} e_{l} = a^{k+l}(e_{0} \dia_{E} e_{0} ),
\end{align*}
for some choice of $e_{0} \dia_{E} e_{0}$.
We identify $e_k = a^k b \in \qgerH$ and assume that the commutative $\mathcal{C}$-bilinear map $\diamond_{L}: \CL \times \CL \rightarrow \CLL$ of fixed length is given by
\begin{enumerate}[(i)]
    \item $a \dia_{L} a = \hbar^{2}\sigma (e_{0} \dia_{E} e_{0})$.
    \item $a \dia_{L} b = b \dia_{L} a = -ab$.
    \item $b \dia_{L} b = -bb$.
\end{enumerate}
Denote by $\shuffle_L$ and $\shuffle_E$ the associated generalized quasi-shuffle products to $\diamond_L$ and $\diamond_E$ respectively. Then we have for $w,v \in \qgerH^{0}$
\begin{align*}
\sigma( \sigma(w)\hat{\shuffle}_{E} \sigma(v) ) = w \hat{\shuffle}_{L} v \,.
\end{align*}
\end{prop}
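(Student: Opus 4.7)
The plan is to proceed by induction on $l(w) + l(v)$ (length measured in the $\{a,b\}$-alphabet, since $\sigma$ acts on $\qgerH$). The base case, with $w$ or $v$ empty, is immediate from $\sigma(\ew) = \ew$ and the fact that $\ew$ is a unit for both $\hat{\shuffle}_E$ and $\hat{\shuffle}_L$. For the inductive step, the key observation is that $\sigma$ is an anti-automorphism, so it intertwines the ``first letter'' operators $F, R$ on $w, v$ with the ``last letter'' operators $\overline{F}, \overline{R}$ on $\sigma(w), \sigma(v)$. This makes Lemma \ref{dualst} the natural tool: I would first rewrite
\begin{align*}
\sigma(w) \hat{\shuffle}_E \sigma(v) = \sigma(w) \,\overline{\hat{\shuffle}_E}\, \sigma(v),
\end{align*}
so that its recursive expansion is in terms of $\overline{F}$ and $\overline{R}$ applied to $\sigma(w)$ and $\sigma(v)$.

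Then I would apply $\sigma$ to each of the three terms in the $\overline{\hat{\shuffle}_E}$-recursion. Using $\sigma(xy) = \sigma(y)\sigma(x)$ and $\sigma \circ \overline{F} \circ \sigma = F$, $\sigma \circ \overline{R} \circ \sigma = R$, the three terms rearrange into the three terms appearing in the $\hat{\shuffle}_L$-recursion for $w \hat{\shuffle}_L v$. The first two of these terms (the ``shuffle'' terms without $\diamond$) match by a direct induction hypothesis, since $\overline{R}(\sigma(w)) = \sigma(R(w))$-type identities reduce the problem to smaller lengths. What remains is to match the ``stuffle-type'' term, namely to show
\begin{align*}
\sigma\bigl((\overline{F}(\sigma(w)) \diamond_E \overline{F}(\sigma(v)))\,(\overline{R}(\sigma(w)) \,\overline{\hat{\shuffle}_E}\, \overline{R}(\sigma(v)))\bigr) = (F(w) \diamond_L F(v))(R(w) \hat{\shuffle}_L R(v)).
\end{align*}

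The core of the argument is therefore a case analysis on $(F(w), F(v)) \in \{a,b\}^2$. When $F(w) = b$ or $F(v) = b$, the last letter of $\sigma(w)$ (resp.\ $\sigma(v)$) is $b = e_0$, and the identification $e_k = a^k b$ together with $\diamond_E$'s formula $e_0 \diamond_E e_0 = \text{(some word)}$ makes the two mixed cases (yielding $a \diamond_L b, b \diamond_L a = -ab$) and the $bb$-case straightforward: they come from pure concatenation under $\sigma$ with at most a sign, matching the definitions of $\diamond_L$ on these pairs. The main obstacle is the case $F(w) = F(v) = a$: here one must carefully extract from $\sigma(w), \sigma(v)$ the letters $e_k, e_l$ that encode the initial $a^{k}b, a^{l}b$ blocks of $w, v$, invoke $e_k \diamond_E e_l = a^{k+l}(e_0 \diamond_E e_0)$, and reconcile the $L$-length versus $E$-length bookkeeping — this is precisely where the prefactor $\hbar^2$ in $a \diamond_L a = \hbar^2 \sigma(e_0 \diamond_E e_0)$ is forced by the calculation, since peeling off a single $a$ on each side in $L$ corresponds to consuming two $\hbar$-weighted letters relative to a block of an $e_k$ in $E$.

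Finally, once the case analysis is verified, collecting the three matched terms yields the $\hat{\shuffle}_L$-recursion for $w \hat{\shuffle}_L v$, and the induction closes. I expect the $(a,a)$-case verification, with careful tracking of how $\sigma$ acts on the auxiliary word $e_0 \diamond_E e_0$ appearing inside $a \diamond_L a$, to be the only nontrivial computation — the remaining bookkeeping is a direct consequence of $\sigma$ being an anti-involution and of Lemma \ref{dualst}.
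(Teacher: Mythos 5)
Your overall skeleton --- induction on $l(w)+l(v)$, Lemma \ref{dualst} to pass to the last-letter recursion for $\hat{\shuffle}_{E}$, and then pushing $\sigma$ through as an anti-automorphism --- is exactly the paper's strategy, and your account of where the factor $\hbar^{2}$ in $a\dia_{L}a$ comes from is essentially correct. The gap is in how you propose to match the resulting three terms with the recursion for $w\,\hat{\shuffle}_{L}\,v$: you treat the two recursions as if they peeled off letters of the same alphabet, but they do not. The product $\hat{\shuffle}_{E}$ strips whole letters $e_{m}\in E$, and since $\sigma(e_{m})=\hbar^{m-1}ab^{m}$, the last $E$-letter of $\sigma(w)$ corresponds to the prefix $ab^{m}$ of $w$ (a word of length $m+1$ in $L$), not to the single letter $F(w)$. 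In particular the claimed intertwining $\sigma\circ\overline{F}\circ\sigma=F$ is false across the two alphabets, and your dichotomy ``$F(w)=b$ forces the last letter of $\sigma(w)$ to be $e_{0}$'' is backwards: $\sigma$ exchanges $a$ and $b$ up to powers of $\hbar$, every non-empty word of $\qgerH^{0}$ has $F(w)=a$, and the last $E$-letter of $\sigma(w)$ being $e_{0}$ corresponds to $w$ beginning with $aa$, not with $b$. Consequently your case analysis on $(F(w),F(v))\in\{a,b\}^{2}$ is vacuous: only the case $(a,a)$ ever occurs, the $\dia_{E}$-term always lands on the $a\dia_{L}a$-term, and the values $a\dia_{L}b=-ab$, $b\dia_{L}b=-bb$ are never matched against any $\dia_{E}$ computation.

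What is actually missing is the step that absorbs the surplus $b$'s. Writing $w=ab^{m}w'$ and $v=ab^{n}v'$, the $\sigma$-image of the dual $E$-recursion yields, after the induction hypothesis,
\begin{align*}
ab^{m}(w'\,\hat{\shuffle}_{L}\,v)+ab^{n}(w\,\hat{\shuffle}_{L}\,v')+(a\dia_{L}a)\,b^{m+n}(w'\,\hat{\shuffle}_{L}\,v'),
\end{align*}
whereas the letter-by-letter $L$-recursion gives $a(b^{m}w'\,\hat{\shuffle}_{L}\,v)+a(w\,\hat{\shuffle}_{L}\,b^{n}v')+(a\dia_{L}a)(b^{m}w'\,\hat{\shuffle}_{L}\,b^{n}v')$. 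Identifying the two requires $bx\,\hat{\shuffle}_{L}\,y=b(x\,\hat{\shuffle}_{L}\,y)$ for the relevant words, which is precisely what the values $\gamma\dia_{L}b=-\gamma b$ guarantee via Lemma \ref{l1}; that is the true role of the $(a,b)$, $(b,a)$, $(b,b)$ entries of $\dia_{L}$, which your proposal instead tries to read off from ``pure concatenation under $\sigma$ with at most a sign.'' Without this absorption step even your two ``shuffle'' terms do not match by the induction hypothesis alone, so the matching as described would fail.
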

\begin{proof}
We use induction on $l(w)+l(v)$. The case $l(w)+l(v)=0$ is trivial since in this case, we have $w=v=\ew$. 
Notice that the case where $w$ or $v$ is the empty word is clear and therefore we will assume in the following that both are non-empty. In the case $l(w)+l(v)=t$,
There exists $m,n \in \Z_{\ijo 0}$, $w',v' \in \qgerH^{0}$ such that $\sigma(w)=\sigma(w')\hbar^{1-m}e_{m}$ and $\sigma(v)=\sigma(v')\hbar^{1-n}e_n$. Then, by Lemma $\ref{dualst}$
\begin{align*}
\sigma( \sigma(w)\hat{\shuffle}_{E} \sigma(v) ) 
&= \sigma( \sigma(w')\hbar^{1-m}e_{m}\hat{\shuffle}_{E}\sigma(v')\hbar^{1-n}e_n ) \\
&= \sigma\big( (\sigma(w')\hat{\shuffle}_{E}\sigma(v')\hbar^{1-n}e_n)\hbar^{1-m}e_{m} + (\sigma(w')\hbar^{1-m}e_{m}\hat{\shuffle}_{E}\sigma(v'))\hbar^{1-n}e_n \\
& \hspace{64mm} + \hbar^{2-m-n}(\sigma(w')\hat{\shuffle}_{E}\sigma(v'))(e_{m} \dia_{E} e_n) \big) \\
&= \hbar^{1-m}\sigma(e_{m})\sigma( (\sigma(w')\hat{\shuffle}_{E}\sigma(v')\hbar^{1-n}e_n)) + \hbar^{1-n}\sigma(e_{n})\sigma(\sigma(w')e_{m}\hat{\shuffle}_{E}\sigma(v'))\\  
& \hspace{71mm}+\hbar^{2-n-m} \sigma (e_{m} \dia_{E} e_n) \sigma(\sigma(w')\hat{\shuffle}_{E}\sigma(v')) \\
&= ab^{m}(w' \hat{\shuffle_{L}} v) + ab^{n}(w \hat{\shuffle_{L}} v') + \hbar^{2-m-n}\sigma(a^{m+n}(e_{0}\dia_{E}e_{0}))(w'\hat{\shuffle_{L}} v') \\
&= ab^{m}(w' \hat{\shuffle_{L}} v) + ab^{n}(w \hat{\shuffle_{L}} v') + \hbar^{2}\sigma(e_{0}\dia_{E}e_{0})b^{m+n}(w'\hat{\shuffle_{L}} v') \\
&= ab^{m}(w' \hat{\shuffle_{L}} v) + ab^{n}(w \hat{\shuffle_{L}} v') + (a \dia_{L}a )b^{m+n}(w'\hat{\shuffle_{L}} v') \\
&=  w \hat{\shuffle}_{L} v.
\end{align*}
\end{proof}

Next, we explain that the $q$-stuffle product (Definition \ref{def:stuffle}) and the $q$-shuffle product are closely related to each other. By this, we first define the following. 
\begin{df}\label{def:sigma}
We define the dual as the anti-automomorphism $\sigma$ on $\qgerH$ satisfying
 \begin{align*}
 \begin{array}{rccc}
\sigma \colon &\qgerH&\longrightarrow& \qgerH                    \\
        & a                    & \longmapsto   & \hbar b \\
        & b                    & \longmapsto   & \hbar^{-1} a \\
         & \ew                   & \longmapsto   & \ew .\\ 
\end{array}
\end{align*}
\end{df}

\begin{coro}\label{q_dual}
For $w,v \in \qgerH^{0}$, we have
\begin{align*}
w \shuffle_{q} v = \sigma (\sigma (w) \ast_{q} \sigma(v)).
\end{align*}
\end{coro}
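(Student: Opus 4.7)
The plan is to obtain this corollary as an immediate specialization of Proposition \ref{generalized_dual}, so that essentially all the work has already been done. First I would take $\diamond_E$ to be the bilinear map underlying the $q$-stuffle product, namely $e_k \diamond_E e_l = e_{k+l}$ from Definition \ref{def:stuffle}. Writing each $e_m = a^m b$ and noting that $e_0 \diamond_E e_0 = b$, this can be rewritten as $e_k \diamond_E e_l = a^{k+l}(e_0 \diamond_E e_0)$, which is exactly the hypothesis on $\diamond_E$ in Proposition \ref{generalized_dual}. The associated generalized quasi-shuffle $\shuffle_E$ is therefore nothing but the $q$-stuffle product $\ast_q$.

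Next I would set $\diamond_L := \diamond_q$ from Definition \ref{def:qshuffle} and verify the three compatibility conditions (i)--(iii) of Proposition \ref{generalized_dual}. Conditions (ii) and (iii), namely $a \diamond_L b = b \diamond_L a = -ab$ and $b \diamond_L b = -bb$, are read off directly from Definition \ref{def:qshuffle}. For condition (i) I need $a \diamond_L a = \hbar^2 \sigma(e_0 \diamond_E e_0)$; since $e_0 \diamond_E e_0 = b$ and $\sigma(b) = \hbar^{-1} a$, the right-hand side equals $\hbar^2 \cdot \hbar^{-1} a = \hbar a$, which matches the value $a \diamond_q a = \hbar a$ prescribed in Definition \ref{def:qshuffle}. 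Hence the associated $\shuffle_L$ is precisely $\shuffle_q$, and feeding these two choices into Proposition \ref{generalized_dual} yields $w \shuffle_q v = \sigma(\sigma(w) \ast_q \sigma(v))$ for every $w, v \in \qgerH^0$, which is the statement of the corollary.

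There is no genuine obstacle beyond bookkeeping, since all of the combinatorial content is already absorbed into Proposition \ref{generalized_dual}. The one place that deserves attention is the verification of condition (i): the factor $\hbar^2$ in $a \diamond_L a = \hbar^2 \sigma(e_0 \diamond_E e_0)$ combines with the $\hbar^{-1}$ hidden in $\sigma(b)$ to produce exactly the single $\hbar$ appearing in $a \diamond_q a = \hbar a$. This $\hbar$-balancing is the algebraic shadow of the $(1-q)$-corrections visible in the explicit formula for $\zeta_q(2)\zeta_q(3)$ in the introduction, and once it is checked the corollary is immediate.
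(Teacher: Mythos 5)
Your proposal is correct and matches the paper's intent exactly: the corollary is stated without explicit proof as a specialization of Proposition \ref{generalized_dual}, with $\diamond_E$ the $q$-stuffle diamond (so $e_0\diamond_E e_0=b$ and $\shuffle_E=\ast_q$) and $\diamond_L=\diamond_q$, and your check that $\hbar^2\sigma(b)=\hbar a$ recovers $a\diamond_q a$ is precisely the needed verification.
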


\begin{rmk}
\begin{enumerate}[(i)]
\item Proposition $\ref{q_dual}$ is different from \cite{EMS}. There a modified Schlesinger-Zudilin $q$-analogue is used and the $\hbar$ is set to $1$. Therefore their dual $\sigma$ (denoted $\tilde{\tau}$ in \cite{EMS}) is defined by
 \begin{align*}
 \begin{array}{rccc}
\sigma \colon &\qgerH&\longrightarrow& \qgerH                    \\
        & a                    & \longmapsto   &  b \\
        & b                    & \longmapsto   &  a \\
         & \ew                   & \longmapsto   & \ew .\\ 
\end{array}
\end{align*}
\item The motivation for the definition of the generalized quasi-shuffle product came from the $q$-shuffle product. It might be interesting to know whether there are other objects whose product structure can also be obtained in terms of generalized quasi-shuffle products. For example, one might ask if there are objects whose products can be described by the generalized quasi-shuffle products coming from the Examples \ref{ex:3} or \ref{ex:4}.
\end{enumerate}
\end{rmk}

Next, we describe the multiple zeta values version of Proposition \ref{generalized_dual} through the generalized quasi-shuffle product. Furthermore, we get a different expression for the double shuffle relations.
Let $L$ be a letter of set with $L=\{ x,y \}$.
We define the commutative $\mathcal{C}$-bilinear map $\dia_{Sh}$ of fixed length by
 \begin{align*}
    \begin{array}{rccc}
\dia_{Sh} \colon &\Q L\times\Q L &\longrightarrow&    \Q         \langle L \rangle         \\
        & (x,y),(y,x)                  & \longmapsto   & -xy  \\
        & (y,y)                   & \longmapsto   & -yy  \\
        & (x,x)                   & \longmapsto   & xy. \\
           \end{array}
\end{align*}
We define $\shuffle_{Sh}$ as the generalized quasi-shuffle product on $\gerH$ by using $\dia_{Sh}$, i.e. 
\begin{align*}
\alpha w \shuffle_{Sh} \beta v = \alpha (w \shuffle_{Sh} \beta v) + \beta (\alpha w \shuffle_{Sh} v) + (\alpha \dia_{Sh} \beta)( w \shuffle_{Sh} v). 
\end{align*}
for $w,v \in \gerH$ and $\alpha, \beta \in L$. Notice that both spaces $\gerH^0$ and $\gerH^1$ are closed under $\shuffle_{Sh}$. 
We can view $\gerH$ as a subspace of $\qgerH$, by identifying $x$ with $a$ and $y$ with $b$. Setting $\hbar=1$, the action of $\tau$ is then exactly the same as $\sigma$ and we have as a consequence of Proposition \ref{generalized_dual}, that $\shuffle_{Sh}$ corresponds to the dual stuffle product: 

\begin{coro}For $w,v \in \gerH^{0}$ we have
\begin{align*}
\tau(\tau(w)*\tau(v))=w \shuffle_{Sh} v.
\end{align*}
\end{coro}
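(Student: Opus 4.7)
The plan is to derive this corollary as a specialization of Proposition \ref{generalized_dual}. First, I would embed $\gerH \hookrightarrow \qgerH$ via $x \mapsto a$, $y \mapsto b$ and set $\hbar = 1$, so that the anti-automorphism $\tau$ becomes $\sigma$. Under the corresponding identification $z_k = x^{k-1}y \leftrightarrow a^{k-1}b = e_{k-1}$ of the stuffle alphabet, the classical rule $z_i \dia z_j = z_{i+j}$ becomes $e_p \dia_E e_q = e_{p+q+1}$, which matches the hypothesis of Proposition \ref{generalized_dual} with the specific choice $e_0 \dia_E e_0 = e_1 = ab$.

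Next, I would compute the companion product $\dia_L$ dictated by the proposition: since $\sigma(ab) = \sigma(b)\sigma(a) = \hbar^{-1}a \cdot \hbar b = ab$, we have $a \dia_L a = \hbar^2 \sigma(ab) = \hbar^2 \cdot ab$, together with the prescribed $a \dia_L b = b \dia_L a = -ab$ and $b \dia_L b = -bb$. Specializing $\hbar = 1$ and translating back to $\{x, y\}$ yields exactly $\dia_{Sh}$, so the associated $\gsh_L$ specializes to $\shuffle_{Sh}$. Therefore, the conclusion of Proposition \ref{generalized_dual}, namely $\sigma(\sigma(w) \gsh_E \sigma(v)) = w \gsh_L v$, specializes at $\hbar = 1$ to $\tau(\tau(w) \ast \tau(v)) = w \shuffle_{Sh} v$ for $w, v \in \gerH^0$.

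The main obstacle is the bookkeeping required to match alphabets: one must verify that $\gsh_E$ with $e_0 \dia_E e_0 = ab$ really coincides with the classical stuffle $\ast$ on $\gerH^1$ under the shift $z_k \leftrightarrow e_{k-1}$, which amounts to a straightforward check that the recursive definitions agree, and one must justify the $\hbar = 1$ specialization (since the relevant products are polynomial in $\hbar$, the specialization is well-defined and compatible with the inclusion $\gerH \subset \qgerH$).
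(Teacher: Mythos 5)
Your proposal is correct and follows the paper's own route: the paper likewise obtains this corollary by specializing Proposition \ref{generalized_dual} under the identification $x\leftrightarrow a$, $y\leftrightarrow b$ with $\hbar=1$ (so $\tau=\sigma$), the choice $e_0\dia_E e_0=e_1=ab$ making $\hat{\shuffle}_E$ the stuffle product and $\dia_L$ reduce to $\dia_{Sh}$. Your write-up is in fact more explicit than the paper's, which leaves these verifications implicit.
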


Furthermore, we consider the definition of $\shuffle_{Sh}$, we can divide $\shuffle_{Sh}$ as follows:
\begin{align*}
\alpha w \shuffle_{Sh} \beta v = \alpha w \shuffle \beta v + D(\alpha w,\beta v).
\end{align*}
For $w,v \in \gerH^{0}$ and $\alpha, \beta \in L$ the
$D(\alpha w,\beta v)$ is the collection of all  terms in $\shuffle_{Sh}$ coming from the $\dia_{Sh}$ part. Therefore, we have the following statement,
\begin{prop}
For $w,v \in \gerH^{0}$ we have
\begin{align*}
\ds(w,v)=-\tau(D(\tau(w),\tau(v))).
\end{align*}
\end{prop}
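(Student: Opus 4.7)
The plan is to derive the identity by combining three ingredients already at our disposal: the preceding corollary $\tau(\tau(w)*\tau(v)) = w \shuffle_{Sh} v$, the decomposition $\shuffle_{Sh} = \shuffle + D$ introduced just above the statement, and the classical fact that the duality involution $\tau$ is an algebra homomorphism of the shuffle algebra.

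First, I would apply the preceding corollary with $w,v$ replaced by $\tau(w),\tau(v)$. Since $\tau$ is an involution on $\gerH^0$, this gives $\tau(w*v) = \tau(w) \shuffle_{Sh} \tau(v)$. Next, I would expand the right-hand side using the decomposition $\shuffle_{Sh} = \shuffle + D$ (extended recursively from the one-letter case to arbitrary words in $\gerH^0$, with $D$ vanishing whenever one argument is empty), obtaining
\begin{align*}
\tau(w * v) \;=\; \tau(w) \shuffle \tau(v) \;+\; D(\tau(w), \tau(v))\,.
\end{align*}

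Then I would apply $\tau$ to both sides. Using that $\tau$ is an involution on the left, and that $\tau$ is a $\shuffle$-algebra homomorphism on the first term of the right (so $\tau(\tau(w) \shuffle \tau(v)) = w \shuffle v$), I would obtain $w * v = w \shuffle v + \tau(D(\tau(w), \tau(v)))$. Rearranging yields $\ds(w, v) = w \shuffle v - w * v = -\tau(D(\tau(w), \tau(v)))$, as claimed.

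The main obstacle is the fact that $\tau(u \shuffle v) = \tau(u) \shuffle \tau(v)$ for $u,v \in \gerH$. This classical identity is not explicitly stated in the excerpt, but it can be derived from Theorem \ref{dualeq} applied to the shuffle product: that theorem provides the ``right-based'' recursion $u\alpha \shuffle v\beta = (u \shuffle v\beta)\alpha + (u\alpha \shuffle v)\beta$, and applying $\tau$ to the standard ``left-based'' recursion and comparing it with the right-based recursion on $\tau(u),\tau(v)$ yields the desired identity by induction on the total length of the words. All the remaining steps are direct consequences of results established earlier in the paper, so once this compatibility of $\tau$ with $\shuffle$ is in hand, the proposition follows in four lines of algebra.
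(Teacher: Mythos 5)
Your argument is correct and is essentially the paper's own proof: both rest on the corollary $\tau(\tau(w)\ast\tau(v))=w\shuffle_{Sh}v$, the decomposition $\shuffle_{Sh}=\shuffle+D$, and the compatibility $\tau(u\shuffle v)=\tau(u)\shuffle\tau(v)$, differing only in whether one starts from $\ds(w,v)$ and substitutes or starts from the corollary and rearranges. Your extra remark justifying $\tau(u\shuffle v)=\tau(u)\shuffle\tau(v)$ via the right-based recursion of Theorem \ref{dualeq} is a welcome elaboration of a step the paper dismisses as following ``from the definition of $\tau$.''
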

\begin{proof}
This follows from the following direct calculation
\begin{align*}
\ds(w,v)&=w\shuffle v - w*v \\
&= w\shuffle v - \tau(\tau(w)\shuffle_{Sh}\tau(v)) \\
&= w\shuffle v - \tau(\tau(w)\shuffle\tau(v) + D(\tau(w),\tau(v))) \\
&= -\tau(D(\tau(w),\tau(v))). \\
\end{align*}
Here we used that $w\shuffle v =\tau(\tau(w)\shuffle\tau(v))$, which follows from the definition of $\tau$.
\end{proof}

\begin{ex} In the case $w=v=z_2$ we get
\begin{align*}
\ds(z_2,z_2)&=-\tau (2xx(y\dia_{Sh}y)+2x(y\dia_{Sh} x)y+2(x\dia_{Sh}x)yy+(x\dia_{Sh} x)(y\dia_{Sh} y)) \\
&=-\tau(-4xxyy+xyyy)=4z_3z_1-z_4.
\end{align*}
\end{ex}

\section*{Acknowledgement}
The author would like to thank his supervisor Prof. H. Bachmann for suggesting this research theme and for providing helpful comments.

\end{document}